\newcommand{\setword}[2]{%
  \phantomsection
  #1\def\@currentlabel{\unexpanded{#1}}\label{#2}%
}
\newtheorem{thm}{Theorem}[section]
\newtheorem{lem}[thm]{Lemma}
\newtheorem{rem}[thm]{Remark}
\newtheorem{prop}[thm]{Proposition}
\newtheorem{df}[thm]{Definition}
\newtheorem{ex}[thm]{Example}
\newtheorem{symbs}[thm]{Notations}
\newcommand{\wh}{\widehat}
\newcommand{\wt}{\widetilde}
\newcommand{\vS}{\varSigma}
\newcommand{\vO}{\varOmega}
\newcommand{\vT}{\varTheta}
\newcommand{\ov}{\overline}
\newcommand{\mf}{\mathfrak}
\newcommand{\vY}{\varUpsilon}
\newcommand{\uph}{\upharpoonright}
\def\N{{\mathbb N}}
\def\R{{\mathbb R}}
\def\F{{\mathcal F}}
\def\B{{\mathfrak B}}
\def\L{{\mathcal L}}
\def\M{{\mathcal M}}
\newcommand{\E}{\mathbb{E}}
\newcommand{\ga}{\gamma}
\newcommand{\ttheta}{\rho(\theta)}
\newcommand{\Ttheta}{\rho(\vT)}
\newcommand{\Nt}{\{N_{t}\}_{t\in\mathbb R_{+}}}
\newcommand{\Wn}{\{W_{n}\}_{n\in\mathbb N}}
\newcommand{\Tn}{\{T_{n}\}_{n\in\mathbb N_0}}
\newcommand{\Xn}{\{X_{n}\}_{n\in\mathbb N}}
\newcommand{\St}{\{S_{t}\}_{t\in\mathbb R_{+}}}
\newcommand{\Qtd}{\{Q_{\theta}\}_{\theta\in D}}
\title{Applications of a change of measures technique for compound mixed renewal processes to the ruin problem}
   \author{S.~M. Tzaninis}
\address{Department of Statistics and Insurance Science\\ University of
	Piraeus\\ 80 Karaoli and Dimitriou str.\\ 185 34 Piraeus\\ Greece}
\email{stzaninis@unipi.gr}
\thanks{}
\date{\today}
\begin{document}




\begin{abstract}
In the present paper the change of measures technique for compound mixed renewal processes, developed in Tzaninis \& Macheras \cite{mt2}, is applied to the ruin problem in order to compute  the ruin probability and to find upper and lower bounds for it.
\smallskip

\noindent
{\bf{Key Words}:} {\rm Compound mixed renewal process, Change of measures, Progressively equivalent measures, Regular conditional probabilities, Ruin probability}.
\smallskip

\noindent
{\bf AMS Subject Classification (2010):} \noindent Primary 60G55, 91B30 ; secondary 28A35, 60A10, 60G44, 60K05.
\smallskip
\end{abstract}

\maketitle

 \section{Introduction}\label{intro}
The change of measures technique  has been successfully applied to various theories such as 
queues and fluid flows (Asmussen \cite{as1},1994, \cite{as2} 1995, Palmowski and Rolski \cite{pr1} 1996; \cite{pr2} 1998), ruin theory (Dassios and Embrechts \cite{de} 1989, Asmussen \cite{as1} 1996, Asmussen and Albrecher \cite{as3} 2010, Schmidli \cite{scm1} 1996, \cite{scm2} 1997, \cite{scm} 2017), simulation (Boogaert and De Waegenaere \cite{bw} 1990, Ridder \cite{rid} 1996) and pricing of insurance risks (premium calculation principles) (Delbaen and Haezendonck \cite{dh} 1989,  Lyberopoulos and Macheras \cite{lm3} 2019, Macheras and Tzaninis \cite{mt3} 2020, \cite{mt2} 2020). The process of interest is usually Markovian and, under a suitably chosen new probability measure, it is again  a Markov process with some ``nicer" desired 
properties.    
 
In \cite{mt2}, the same problem was investigated  for the class of compound mixed renewal processes, 
which are not, in general, Markov ones. In the same paper, given a compound mixed renewal process $S$ under $P$, a full characterization of all probability measures $Q$, which are progressively equivalent to $P$ and preserve the type of $S$, but with some better desired properties, was provided, see \cite{mt2}, Theorem 4.5 and Corollary 4.8 as well as Proposition 4.15. Note that the martingales $L^r$ and the probability measures $Q^r$ appearing in \cite{scm} are special instances of the martingales $\wt M^{(\gamma)}(\theta)$ and of the probability measures $Q_\theta$ of Theorem 4.5 from \cite{mt2}. 

Part of \cite{mt2}, Proposition 4.15, is Proposition \ref{Schmidli}, formulated for the purposes of the present paper and being the starting point for  applications to the ruin problem. Proposition \ref{Schmidli}, as well as Proposition \ref{slln}, extend the corresponding results for the renewal risk model (see e.g. \cite{scm}, Lemmas 8.4 and 8.6, respectively) to the compound mixed renewal processes.

A consequence of Proposition \ref{Schmidli} is Theorem \ref{clm2}, where it is proven, that  if the net profit condition is fulfilled under an original measure $P$, and $S$ is a  compound mixed renewal process under $P$ then under the new measure resulting from Proposition \ref{Schmidli} the process $S$ will be of the same type, except that the net profit condition will no longer be fulfilled and ruin will  always 
occur within finite time.    

Thus, the ruin problem becomes easier to handle, since by Theorem \ref{clm2} under the new measure the probability of ruin is equal to 1, giving us the opportunity to express the ruin probability under $P$ as a quantity under the new measure, see Proposition \ref{prop2}, and to find upper and lower bounds for it, see Proposition \ref{cor3}.

 \section{Preliminaries}\label{prel}
 
 {\em Throughout this paper, unless stated otherwise, $(\vO,\vS,P)$ is a fixed but arbitrary probability space.} The symbol  $\mathcal L^{1}(P)$ stands for the family of all real-valued $P$-integrable  functions on $\vO$. Functions that are $P$-a.s. equal are not identified. We denote by  $\sigma(\mathcal G)$  the $\sigma$-algebra generated by a family $\mathcal G$ of subsets of $\vO$. Given a topology  $\mf{T}$ on $\vO$ we write ${\mf B}(\vO)$ for its {\bf Borel $\sigma$-algebra} on $\vO$, i.e. the $\sigma$-algebra generated by $\mf{T}$. Our measure theoretic terminology is standard and generally follows  \cite{Co}.   For the definitions of real-valued random variables and random variables we refer to  \cite{Co}, p. 308.   We apply the notation $P_{X}:=P_X(\theta):={\bf{K}}(\theta)$ to mean that $X$ is distributed according to the law ${\bf{K}}(\theta)$, where $\theta\in D\subseteq\R^d$ ($d\in\N$)  is the parameter of the distribution. We denote again by ${\bf K}(\theta)$ the distribution function induced by the probability distribution ${\bf K}(\theta)$.  Notation  ${\bf Ga}(b,a)$, where $a,b\in(0,\infty)$, stands for the law  of gamma  distribution (cf. e.g. \cite{Sc}, p. 180). In particular, ${\bf Ga}(b,1)={\bf Exp}(b)$ stands for the law of exponential distribution. 
 For  two real-valued random variables $X$ and $Y$ we write $X=Y$ $P$-a.s. if $\{X\neq Y\}$ is a $P$-null set. If $A\subseteq\vO$, then $A^c:=\vO\setminus A$, while $\chi_A$ denotes the indicator (or characteristic) function of the set $A$.  For a map $f:D\longrightarrow E$ and for a non-empty set $A\subseteq D$ we denote  by $f\upharpoonright A$ the restriction of $f$ to $A$. 
   We write $\E_P[X\mid\F]$ for a version of a conditional expectation (under $P$) of $X\in\L^{1}(P)$ given a $\sigma$-subalgebra $\mathcal{F}$ of $\vS$.  
 For $X:=\chi_E\in\mathcal{L}^1(P)$ with $E\in\vS$ we set $P(E\mid\mathcal{F}):=\E_P[\chi_E\mid\mathcal{F}]$. For the unexplained terminology of Probability and Risk Theory we refer to \cite{Sc}. \smallskip
 
 Given two measurable spaces $(\vO,\vS)$ and $(\vY,H)$, a  function $k$ from $\vO\times H$ into $[0,1]$ is a {\bf $\vS$-$H$-Markov kernel} if it has the following properties:
 \begin{itemize}
 	\item[{\bf(k1)}] The set-function $B\longmapsto k(\omega,B)$ is a probability measure on $H$ for any fixed $\omega\in\vO$.
 	\item[{\bf(k2)}] The function $\omega\longmapsto k(\omega,B)$ is $\vS$-measurable for any fixed $B\in{H}$.
 \end{itemize}
 In particular, given a real-valued random variable $X$ on $\vO$ and a $d$-dimensional random vector $\vT$ on $\vO$, a {\bf conditional distribution of $X$ over $\vT$} is a $\sigma(\vT)$-$\B$-Markov kernel denoted by $P_{X\mid\vT}:=P_{X\mid\sigma(\vT)}$ and satisfying for each $B\in\B$ condition
 $$
 P_{X\mid\vT}(\bullet,B)=P(X^{-1}[B]\mid\sigma(\vT))(\bullet)
 \quad{P}\uph\sigma(\vT)-\mbox{a.s.}.
 $$
 
 Clearly, for every $\B_d$-$\B$-Markov kernel $k$, the map $K(\vT)$ from $\vO\times\B$ into $[0,1]$ defined by means of
 $$
 K(\vT)(\omega,B):=(k(\bullet,B)\circ\vT)(\omega)
 \quad\mbox{for any}\;\;(\omega,B)\in \vO\times\B
 $$
 is a $\sigma(\vT)$-$\B$-Markov kernel. Then for $\theta=\vT(\omega)$ with $\omega\in\vO$ the probability measures $k(\theta,\bullet)$ are distributions on $\B$ and so we may write $\mathbf{K}(\theta)(\bullet)$ instead of $k(\theta,\bullet)$. Consequently, in this case $K(\vT)$ will be denoted by $\mathbf{K}(\vT)$.

 For any real-valued random variables $X, Y$ on $\vO$ we say that $P_{X|\vT}$ and $P_{Y|\vT}$ are $P\upharpoonright\sigma(\vT)$-equivalent and we write $P_{X|\vT}=P_{Y|\vT}$ $P\upharpoonright\sigma(\vT)$-a.s.,  if there exists a $P$-null set $M\in\sigma(\vT)$ such that for any $\omega\notin M$ and $B\in \B$ the equality $P_{X|\vT}(\omega,B)=P_{Y|\vT}(\omega,B)$ holds true.\smallskip
 
For the definition of a $P$--conditionally (stochastically) independent process over $\sigma(\vT)$ as well as of a $P$--conditionally identically distributed process over $\sigma(\vT)$ we refer to \cite{mt2}.  We say that a process is
 $P$--{\bf conditionally (stochastically) independent or identically distributed given $\vT$}, if it is conditionally independent or identically distributed over the $\sigma$-algebra  $\sigma(\vT)$.

 {\em For the rest of the paper we simply write ``conditionally" in the place of ``conditionally given $\vT$” whenever conditioning refers to  $\vT$.}

 {\em Henceforth,  unless stated otherwise, $(\vY,H):=((0,\infty),\B(\vY))$  and $\vT$ is a $d$--dimensional  random vector on $\vO$ with values on $D\subseteq\R^d$ ($d\in\N$).} 
 \section{A Change of Measures Technique for Compound Mixed Renewal Processes}\label{CRPPEM}

We first recall some additional background material, needed in this section.

A family $N:=\Nt$ of  random variables from $(\vO,\vS)$ into $(\ov{\R}, \B(\ov\R))$ is called a {\bf counting} (or { \bf claim number}) {\bf process}, if there exists a $P$--null set $\vO_N\in\vS$ such that the process $N$ restricted on $\vO\setminus\vO_N$ takes values in $\N_0\cup\{\infty\}$, has right-continuous paths, presents jumps of size (at most) one, vanishes at $t=0$ and increases to infinity. Without loss of generality we may and do assume, that $\vO_N=\emptyset$. Denote by $T:=\Tn$ and $W:=\Wn$  the {\bf{(claim) arrival process}} and  {\bf (claim) interarrival process}, respectively (cf. e.g. \cite{Sc}, Section 1.1, page 6 for the definitions) associated with $N$. Note also that every arrival process induces a counting process, and vice versa (cf. e.g. \cite{Sc}, Theorem 2.1.1). \smallskip

Furthermore, let $X:=\Xn$ be a sequence of positive  real-valued random variables on $\vO$, and for any $t\geq 0$ define
$$
S_t:=\begin{cases}  \sum^{N_t}_{k=1}X_k &\text{if}\;\; t>0; \\ 0 &\text{if}\;\; t=0. \end{cases}
$$
Accordingly, the sequence $X$ is said to be the {\bf claim size process}, and the family $S:=\St$ of real-valued random variables on $\vO$ is said to be the {\bf aggregate  claims process induced by} the pair $(N,X)$.  Recall that a pair $(N,X)$ is called a {\bf risk process}, if $N$ is a counting process, $X$ is $P$--i.i.d. and the processes $N$ and $X$ are $P$--independent (see \cite{Sc}, Chapter 6, Section 6.1).\smallskip

The following definition has been introduced in \cite{lm6z3}, Definition 3.1, see also \cite{mt1}, Definition 3.2(b).
\begin{df}
	\label{mrp}
 	\normalfont
	A counting process $N$  is  a $P$--{\bf mixed renewal process with mixing parameter $\vT$ and  interarrival time conditional distribution $\bf{K}(\vT)$} (written $P$--MRP$(\bf{K}(\vT))$ for short), if the induced  interarrival process $W$ is $P$--conditionally independent and 
	$$
	\forall\;n\in\N\qquad  [P_{W_n\mid\vT}=\bf{K}(\vT)\quad P\upharpoonright\sigma(\vT)\text{-a.s.}].
	$$
	In particular, if the distribution $P_\vT$ of $\vT$ is degenerate at some point $\theta_0\in D$, then the counting process $N$ becomes  a $P$--{\em renewal process with  interarrival time distribution $\bf{K}(\theta_0)$} (written $P$--RP$(\bf{K}(\theta_0))$ for short).
\end{df}

Accordingly,  an aggregate claims process $S$ induced by a $P$--risk process $(N,X)$  such that $N$ is a $P$--MRP$(\bf{K}(\vT))$ is called  a {\bf compound mixed renewal process  with parameters ${\bf K}(\vT)$ and $P_{X_1}$} ($P$--CMRP$({\bf K}(\vT),P_{X_1})$ for short). In particular, if $P_\vT$ is degenerate at $\theta_0\in D$, then $S$ is called a {\bf compound  renewal process with parameters ${\bf K}(\theta_0)$ and $P_{X_1}$} ($P$--CRP$({\bf K}(\theta_0),P_{X_1})$ for short).\smallskip

{\em Throughout what follows we denote again by ${\bf K}(\vT)$ and ${\bf K}(\theta)$ the conditional distribution function and the distribution function  induced by the conditional probability distribution ${\bf K}(\vT)$ and the  probability distribution ${\bf K}(\theta)$, respectively.}\smallskip

 The following conditions will be useful for our investigations:
 \begin{itemize}
 	\item[{\bf(a1)}] the pair $(W,X)$ is $P$--conditionally independent;
 	\item[{\bf(a2)}] the random vector $\vT$ and the process $X$ are $P$--(unconditionally) independent.
 \end{itemize}
 {\em Next, whenever condition (a1) and (a2) holds true we shall write that the quadruplet $(P,W,X,\vT)$ or (if no confusion arises) the probability measure $P$ satisfies (a1) and (a2), respectively}.\smallskip

Since conditioning is involved in the definition of (compound) mixed renewal processes, it is natural to expect that regular conditional probabilities (or disintegrations) will play a key. To this purpose we recall the following definition.
 
 \begin{df}\label{rcp}
 	 \normalfont
 	Let $(\vY,H,R)$ be a probability probability space. A family $\{P_y\}_{y\in\vY}$ of probability measures on $\vS$ is called a {\bf regular conditional probability} (rcp for short) of $P$ over $R$  if
 	\begin{itemize}
 		\item[{\bf(d1)}] 
 		for each $E\in\vS$ the map $y\longmapsto P_y(E)$ is $H$--measurable;
 		\item[{\bf(d2)}]
 		$\int P_{y}(E)\,R(dy)=P(E)$ for each $E\in\vS$.
 	\end{itemize}
  \end{df}

 	If $f:\vO\longrightarrow\vY$ is an inverse-measure-preserving function (i.e. $P(f^{-1}(B))=R(B)$ for each $B\in{H}$), a rcp $\{P_{y}\}_{y\in\vY}$ of $P$ over $R$ is called {\bf consistent} with $f$ if, for each $B\in{H}$, the equality $P_{y}(f^{-1}(B))=1$ holds for $R$--almost every $y\in B$.

 {\em From now on, unless stated otherwise, the family $\{P_\theta\}_{\theta\in D}$ is a rcp of $P$ over $P_\vT$ consistent with $\vT$.}
 
 Regular conditional probabilities seem to have a bad  reputation  when it comes to applications, and that is probably due to the fact that their own existence is not always guaranteed (see \cite{lmt1}, Examples 4 and 5). Nevertheless, as the spaces used in applied Probability Theory are mainly Polish ones, such rcps  always exist (see \cite{fa}, Theorem 6), and in fact they can be explicitly constructed for the class of (compound) mixed renewal processes (see \cite{mt2}, Proposition 4.1).

 We write $\F:=\{\F_t\}_{t\in\R_+}$, where $\F_t:=\sigma(\F^S_t\cup\sigma(\vT))$, for the canonical filtration generated by $S$ and $\vT$, $\F^S_\infty:=\sigma(\bigcup_{t\in\R_+}\F^S_t)$ and $\F_\infty:=\sigma(\F^S_\infty\cup\sigma(\vT))$.  For the definition of a $(P,\mathcal{Z})$--martingale in $\mathcal{L}^1(P)$, where $\mathcal{Z}=\{\mathcal Z_t\}_{t\in\R_+}$ is a filtration for $(\vO,\vS)$, we refer to \cite{Sc}, p. 25. A $(P,\mathcal{Z})$--martingale $\{Z_t\}_{t\in\R_+}$ in $\mathcal{L}^1(P)$ is {\bf $P$--a.s. positive},  if $Z_t$ is $P$--a.s. positive for each $t\geq 0$. For $\mathcal Z=\F$ we write ``martingale in $\mathcal{L}^1(P)$" instead of ``$(P,\mathcal{Z})$--martingale in $\mathcal{L}^1(P)$", for simplicity.

 \begin{symbs}\label{symb2}
  \normalfont 
 	{\bf(a)}  The class of all real--valued $\B(\vY)$--measurable functions $\ga$  such that $\E_{P}\left[e^{\ga(X_{1})}\right]=1$ will be denoted by $\F_{P}:=\F_{P,X_1,\ln}$. The class of all real--valued  $\B(D)$--measurable functions $\xi$ on $D$ such that $P_{\vT}(\{\xi>0\})=1$  and $\E_P[\xi(\vT)]=1$ is denoted by $\mathcal{R}_+(D):=\mathcal{R}_+(D,\mathfrak{B}(D), P_{\vT})$.\smallskip
 	
 	{\bf (b)} Denote by $\mathfrak{M}^k(D)$ ($k\in\N$)  the class of all $\B(D)$--$\B(\R^k)$--measurable functions on $D$.  For each $\rho\in\mf{M}^k(D)$ the class of all probability measures $Q$ on $\vS$ satisfying  (a1)  and  (a2), are {\bf progressively equivalent} to $P$, i.e. $Q\uph\F_t\sim P\uph\F_t$ for any $t\geq 0$ (in the sense of absolute continuity),   and such that $S$ is a $Q$--CMRP$({\bf \Lambda}(\rho(\vT)),Q_{X_1})$ is denoted by $\M_{S,{\bf\Lambda}(\rho(\vT))}:=\M_{S,{\bf \Lambda}(\rho(\vT)),P,{X_1}}$. In the special case $d=k$ and $\rho:=id_D$ we write $\M_{S,{\bf \Lambda}(\vT)}:=\M_{S,{\bf \Lambda}(\Ttheta)}$  for simplicity. \smallskip
 	
 	{\bf (c)} For  given $\rho\in\mathfrak M^k(D)$ and  $\theta\in D$,  denote by ${\M}_{S,{\bf\Lambda}(\ttheta)}$  the class of all probability measures $Q_\theta$ on $\vS$, such that $Q_\theta\uph\F_t\sim P_\theta\uph\F_t$ for any $t\in\R_+$ and $S$ is a $Q_\theta$-CRP$({\bf \Lambda}(\ttheta),(Q_\theta)_{X_1})$.  
 \end{symbs}
 
 {\em From now on, unless stated otherwise,  $P\in\M_{S,{\bf K}(\vT)}$ is the initial probability measure under which $S$ is a $P$--CMRP$({\bf K}(\vT),P_{X_1})$.} \smallskip

It follows a result to show that the martingales $L^r:=\{L_t^r\}_{t\in\R_+}$ and the measures $Q^r$ appearing in \cite{scm}, Chapter 8, Section 8.3, are special instances of the martingales $\wt{M}^{(\ga)}(\theta)$ and the measures $Q_{\theta}$, respectively, of the main result of \cite{mt2}, i.e. Theorem 4.5. 

\begin{rem}\label{lem35} 
	\normalfont
For any $r\in\R_+$ such that $\E_P[e^{rX_1}]<\infty$ and for any $\theta\in{L}_P^c$,  where $L_P$ is the $P_\vT$--null appearing in \cite{mt2}, Proposition 3.3,  let $\kappa_{\theta}(r)$ be the unique solution to the equation 
 \begin{equation}\label{lem35a}
M_{X_1}(r)\cdot (M_{\theta})_{W_1}\bigl(-\kappa_{\theta}(r)-c(\theta)\cdot{r}\bigr)=1,
\end{equation}
where $M_{X_1}$ and $(M_{\theta})_{W_1}$ are the moment generating function of $X_1$ and $W_1$ under the measures $P$ and $P_{\theta}$, respectively. (Such a solution exists by e.g. \cite{rss}, Lemma 11.5.1(a)). Define the function $\kappa:D\times\R_+\longrightarrow\R$ by means of 
\[
\kappa(\theta,r):=\kappa_{\theta}(r)\quad\mbox{for any}\quad (\theta,r)\in{D}\times\R_+,
\]
and for fixed $r\in\R_+$ denote by $\kappa_{\vT}$ the random variable defined by the formula
\[
\kappa_{\vT}(r)(\omega):=\kappa_{\vT(\omega)}(r)\quad\mbox{for any}\quad\omega\in\vO.
\]
Then, due to \cite{mt2}, Lemma 4.13,  $\kappa_{\vT}(r)$ is the $P\uph\sigma(\vT)$-a.s. unique solution to the equation
 \begin{equation}\label{lem35b}
M_{X_1}(r)\cdot \E_P\bigl[e^{-\bigl(\kappa_{\vT}(r)+c(\vT)\cdot{r}\bigr)W_1}\mid\vT\bigr]=1\quad{P}\uph\sigma(\vT)-\mbox{a.s.}.
\end{equation}
\end{rem}

The following proposition is a part of Proposition 4.15 from \cite{mt2}. Since it is the basic tool for the proofs of our results, we restate it exactly in the form needed for our purposes.

\begin{prop}\label{Schmidli}
For any $r\in\R_+$ such that $\E_P[e^{rX_1}]  <\infty$,  and for any $\theta\notin{L}_P$, let $\kappa_{\theta}(r)$ be the unique solution to the equation \eqref{lem35a}, and let $\kappa_{\vT}(r)$ be as in Remark \ref{lem35}. Fix on arbitrary $r\in\R_+$ as above and let $\rho\in\mathfrak{M}^k(D)$ be given.
	
For each pair $(\ga,\xi)\in \F_P\times\mathcal{R}_+(D)$ with $\gamma(x):=r\cdot x-\ln\E_{P}[e^{r \cdot X_1}]$ for any $x\in\vY$,  there exists a unique probability measure $Q:=Q^r\in\M_{S,{\bf\Lambda}(\rho(\vT))}$, where 
\[
{\mathbf\Lambda (\Ttheta)}(B):=\frac{\E_{P}[\chi_{W_1^{-1}[B_2]}\cdot e^{-(\kappa_\vT(r)+c(\vT)\cdot r)\cdot W_1}\mid\vT]}{\E_{P}[e^{-(\kappa_\vT(r)+c(\vT)\cdot r)\cdot W_1}\mid\vT]}\quad{P}\uph\sigma(\vT)-\mbox{a.s.}
\]
for any $B\in\B(\vY)$, determined by  condition 
\[
Q(A)=\int_AM_t^{(\ga)}(\vT)dP\quad\mbox{for all}\quad 0\leq{u}\leq{t}\quad\mbox{and}\quad A\in\mathcal{F}_u\leqno(RRM_{\xi})
\]
with the martingale $M^{(\ga)}(\vT)$ in $\mathcal{L}^1(P)$ fulfilling condition 
\[
M_t^{(\ga)}(\vT)=\xi(\vT)\cdot \wt{M}_t^{(\ga)}(\vT)\quad P\uph\sigma(\vT)-\mbox{a.s.}.
\]

Moreover, there exist an essentially unique rcp $\Qtd:=\{Q_{\theta}^r\}_{\theta\in{D}}$ of $Q$ over $Q_\vT$  consistent with $\vT$ and a $P_\vT$-null set $L_{\ast\ast}\in\B(D)$, satisfying for any $\theta\notin L_{\ast\ast}$ conditions $Q_\theta\in{\M}_{S,{\bf \Lambda}(\ttheta)}$ and 
\begin{equation}
\tag{$RRM_\theta$}
Q_\theta(A)=\int_{A} \wt M^{(\ga)}_{t}(\theta)\,dP_\theta\quad\text{for all}\,\,\,0\leq u\leq t\,\,\text{and}\,\, A\in\F_{u}
\label{rcp2}
\end{equation} 
 with the martingale $\wt{M}_t^{(\ga)}(\theta)$ in $\mathcal{L}^1(P_\theta)$ fulfilling condition 
\[
\wt M_t^{(\ga)}(\theta) =e^{r\cdot S_t-(\kappa_\theta(r)+c(\theta)\cdot r)\cdot T_{N_t} +\ln \E_P[e^{r\cdot X_1}]}\cdot \frac{\int^{\infty}_{t-T_{N_t}} e^{-(\kappa_\theta(r)+c(\theta)\cdot r)\cdot w} \, (P_\theta)_{W_1}(dw)}{1-{\bf{K}}(\theta)(t-T_{N_t})}.
\]

In particular, if $P_{W_1}$ is absolutely continuous with respect to the Lebesgue measure $\lambda$ restricted to $\mf{B}([0,1])$, then the martingale $L^r(\theta):=\{L^r_t(\theta)\}_{t\in\R_+}$ for $r\in\R_+$, appearing in \cite{scm}, Lemma 8.4, coincides with the martingale $\wt{M}^{(\ga)}(\theta)$ in $\mathcal{L}^1(P_\theta)$ for any $\theta\notin{L}_{\ast\ast}$, and for any $t\in\R_+$ condition 
\[
M_t^{(\ga)}(\vT)=\xi(\vT)\cdot L_t^r(\vT)
\]
holds $P\uph\sigma(\vT)$--a.s. true. 
\end{prop}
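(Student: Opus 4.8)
The plan is to deduce the proposition from the general change-of-measure results of \cite{mt2} --- of which it is, by construction, the specialisation to the Esscher parameter $\ga(x)=r\cdot x-\ln\E_P[e^{r X_1}]$ --- and then to make the resulting formulae explicit. First I would check that $\ga$ is admissible: since $\E_P[e^{rX_1}]<\infty$ by hypothesis, $\ga$ is a real-valued $\B(\vY)$-measurable function with $\E_P[e^{\ga(X_1)}]=\E_P[e^{rX_1}]/\E_P[e^{rX_1}]=1$, so $\ga\in\F_P$. With $\xi\in\mathcal R_+(D)$ arbitrary, \cite{mt2}, Theorem~4.5 and Corollary~4.8 then provide the unique progressively equivalent probability measure $Q\in\M_{S,{\bf\Lambda}(\rho(\vT))}$ under which $S$ is a $Q$--CMRP with tilted conditional interarrival and claim-size laws, determined by $(RRM_\xi)$ through the martingale $M^{(\ga)}(\vT)=\xi(\vT)\cdot\wt M^{(\ga)}(\vT)$ in $\mathcal L^1(P)$; and \cite{mt2}, Proposition~4.15 (together with \cite{mt2}, Proposition~4.1 for the disintegration) yields the essentially unique rcp $\{Q_\theta^r\}_{\theta\in D}$ of $Q$ over $Q_\vT$ consistent with $\vT$, the $P_\vT$-null set $L_{\ast\ast}$, the measures $Q_\theta\in\M_{S,{\bf\Lambda}(\rho(\theta))}$, and condition $(RRM_\theta)$ with the disintegrated martingale $\wt M^{(\ga)}(\theta)$ in $\mathcal L^1(P_\theta)$. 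After this step all the qualitative content --- existence, uniqueness, the martingale property, the type of $S$ under $Q$ and under the $Q_\theta$'s, the disintegration --- is already in hand, and only the identification of the objects remains.

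The next and main step is to specialise the master formula of \cite{mt2}, Theorem~4.5 for $\wt M_t^{(\ga)}(\theta)$. Using $\sum_{k=1}^{N_t}\ga(X_k)=r S_t-N_t\ln\E_P[e^{rX_1}]$ and the renewal splitting $t=T_{N_t}+(t-T_{N_t})$, I expect the per-step Esscher normalisations to collapse. Writing $\phi(\theta):=(M_\theta)_{W_1}(-\kappa_\theta(r)-c(\theta)r)$ for the interarrival normalisation, equation \eqref{lem35a} reads $M_{X_1}(r)\,\phi(\theta)=1$; hence the $N_t$ factors $M_{X_1}(r)^{-1}$ coming from $\prod_{k\le N_t}e^{\ga(X_k)}$ and the $N_t+1$ factors $\phi(\theta)^{-1}$ coming from the tilted interarrival times (the extra one being attached to the current, incomplete interarrival time) combine into the single factor $M_{X_1}(r)=e^{\ln\E_P[e^{rX_1}]}$, while $\prod_{k\le N_t}e^{rX_k}=e^{rS_t}$ and $\prod_{k\le N_t}e^{-(\kappa_\theta(r)+c(\theta)r)W_k}=e^{-(\kappa_\theta(r)+c(\theta)r)T_{N_t}}$, and the only surviving boundary term is the ratio displayed in the statement. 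This yields the asserted expression for $\wt M_t^{(\ga)}(\theta)$; the same substitution identifies the tilted conditional interarrival law furnished by \cite{mt2}, Theorem~4.5 with the Esscher transform of $(P_\theta)_{W_1}$ with weight $e^{-(\kappa_\theta(r)+c(\theta)r)w}$, which is the displayed ${\bf\Lambda}(\rho(\vT))$, and the passage from the $\theta$-wise identities to the $P\uph\sigma(\vT)$-a.s. ones uses Remark~\ref{lem35}, i.e. that $\kappa_\vT(r)$ is the $P\uph\sigma(\vT)$-a.s. unique solution of \eqref{lem35b}, together with $\int P_\theta(\,\cdot\,)\,P_\vT(d\theta)=P(\,\cdot\,)$. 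I expect this bookkeeping --- tracking the exact cancellation of the $N_t$ copies of $\ln\E_P[e^{rX_1}]$ and of the normalising constants against \eqref{lem35a}, so that precisely the single term $+\ln\E_P[e^{rX_1}]$ and the current-interarrival ratio are left --- to be the only real work in the proof.

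Finally, for the ``in particular'' clause, under the stated absolute continuity of $P_{W_1}$ I would compare the expression for $\wt M_t^{(\ga)}(\theta)$ obtained above with the martingale $L^r(\theta)$ of \cite{scm}, Lemma~8.4: after translating Schmidli's notation into the present one via $c(\theta)$ and $\kappa_\theta(r)$, the two are the very same exponential-times-renewal-correction process, so $L^r_t(\theta)=\wt M^{(\ga)}_t(\theta)$ for every $\theta\notin L_{\ast\ast}$; composing with $\vT$ and invoking $M^{(\ga)}(\vT)=\xi(\vT)\cdot\wt M^{(\ga)}(\vT)$ then gives $M_t^{(\ga)}(\vT)=\xi(\vT)\cdot L_t^r(\vT)$ $P\uph\sigma(\vT)$-a.s. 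The only subtlety here is to reconcile the two papers' parametrisations so that this identification is exact rather than up to multiplicative constants.
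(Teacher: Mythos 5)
Your proposal is consistent with the paper: the paper gives no independent proof of Proposition \ref{Schmidli}, but states explicitly that it is a restatement of part of Proposition 4.15 of \cite{mt2} (specialised to the Esscher choice $\ga(x)=r\cdot x-\ln\E_P[e^{r\cdot X_1}]$), and your derivation --- admissibility of $\ga$, invoking \cite{mt2}, Theorem 4.5/Corollary 4.8 and Proposition 4.15 for existence, uniqueness, the rcp $\{Q^r_\theta\}_{\theta\in D}$ and the null set $L_{\ast\ast}$, then the cancellation via \eqref{lem35a} giving the explicit form of $\wt M^{(\ga)}(\theta)$ and the identification with $L^r(\theta)$ of \cite{scm}, Lemma 8.4 --- is precisely the route by which that cited result yields the statement. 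So the approach is essentially the same as the paper's (a citation of \cite{mt2}), with your bookkeeping of the normalising constants a correct reconstruction of the underlying computation.
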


\begin{lem}\label{lem4}
  For any $r\in\R_+$, $\theta\notin L_{\ast\ast}$, $Q^r_\theta$ and $\kappa_{\theta}(r)$ as in Proposition \ref{Schmidli} condition 
\[
\kappa^{\prime}_\theta(r)=\frac{\E_{Q^r_\theta}[X_1]}{\E_{Q^r_\theta}[W_1]} -c(\theta),  
\]
holds true.
\end{lem}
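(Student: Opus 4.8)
The plan is to differentiate the defining relation \eqref{lem35a} implicitly with respect to $r$ and then to recognise the two logarithmic derivatives that appear as the $Q^r_\theta$-expectations of $X_1$ and of $W_1$. Throughout, fix $\theta\notin L_{\ast\ast}\cup L_P$ and write $M_{X_1}(r)=\E_P[e^{rX_1}]$, $(M_\theta)_{W_1}(s)=\E_{P_\theta}[e^{sW_1}]$, and abbreviate $h(r):=-\kappa_\theta(r)-c(\theta)\cdot r$, so that \eqref{lem35a} reads $M_{X_1}(r)\cdot(M_\theta)_{W_1}(h(r))=1$ for all admissible $r$ (i.e. $r$ in the interior of the set where $\E_P[e^{rX_1}]<\infty$, which is where $\kappa'_\theta$ is meaningful).

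First I would record that $r\mapsto\kappa_\theta(r)$ is differentiable there: this follows from the implicit function theorem applied to the map $(r,\kappa)\mapsto M_{X_1}(r)\cdot(M_\theta)_{W_1}(-\kappa-c(\theta)\cdot r)$, since both moment generating functions are smooth in the interior of their domains of finiteness and $(M_\theta)_{W_1}$ is strictly increasing there, so the partial derivative in $\kappa$ is nonzero (the relevant facts, together with the existence and uniqueness of $\kappa_\theta(r)$, are already contained in \cite{rss}, Lemma 11.5.1). Differentiating $M_{X_1}(r)\cdot(M_\theta)_{W_1}(h(r))\equiv 1$ and using $h'(r)=-\kappa'_\theta(r)-c(\theta)$ gives
\[
M'_{X_1}(r)\cdot(M_\theta)_{W_1}(h(r))=M_{X_1}(r)\cdot(M_\theta)'_{W_1}(h(r))\cdot\bigl(\kappa'_\theta(r)+c(\theta)\bigr),
\]
hence, after dividing,
\[
\kappa'_\theta(r)+c(\theta)=\frac{M'_{X_1}(r)\,/\,M_{X_1}(r)}{(M_\theta)'_{W_1}(h(r))\,/\,(M_\theta)_{W_1}(h(r))},
\]
where the differentiations under the integral signs defining $M_{X_1}$ and $(M_\theta)_{W_1}$ are legitimate in the interior of the respective domains of finiteness.

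The remaining task is to identify the two ratios on the right as $\E_{Q^r_\theta}[X_1]$ and $\E_{Q^r_\theta}[W_1]$. By Proposition \ref{Schmidli}, under $Q^r_\theta$ the pair $(N,X)$ is a $Q^r_\theta$-risk process and $S$ is a $Q^r_\theta$-CRP$(\mathbf{\Lambda}(\rho(\theta)),(Q^r_\theta)_{X_1})$. The choice $\gamma(x)=r\cdot x-\ln\E_P[e^{r X_1}]$ (together with \cite{mt2}, Theorem 4.5) forces $(Q^r_\theta)_{X_1}(dx)=e^{rx}\,M_{X_1}(r)^{-1}\,P_{X_1}(dx)$, the Esscher transform of $P_{X_1}$, whence $\E_{Q^r_\theta}[X_1]=M'_{X_1}(r)/M_{X_1}(r)$; similarly, reading off $\mathbf{\Lambda}(\rho(\theta))$ from Proposition \ref{Schmidli} shows that under $Q^r_\theta$ the law of $W_1$ is $e^{h(r)w}\,(M_\theta)_{W_1}(h(r))^{-1}\,(P_\theta)_{W_1}(dw)$, the Esscher transform of $(P_\theta)_{W_1}$ at tilt $h(r)$, so $\E_{Q^r_\theta}[W_1]=(M_\theta)'_{W_1}(h(r))/(M_\theta)_{W_1}(h(r))$. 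Substituting these two identities into the displayed formula yields $\kappa'_\theta(r)+c(\theta)=\E_{Q^r_\theta}[X_1]/\E_{Q^r_\theta}[W_1]$, which is the assertion.

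The computation is essentially mechanical; the only points that require genuine care are the analytic justifications — differentiability of $r\mapsto\kappa_\theta(r)$ via the implicit function theorem and the interchange of derivative and integral for the two moment generating functions — together with the bookkeeping needed to read the marginal laws of $X_1$ and $W_1$ under $Q^r_\theta$ off Proposition \ref{Schmidli} (and, implicitly, off \cite{mt2}, Theorem 4.5). I expect the moment-generating-function differentiability and domain-of-finiteness issues to be the main, though standard, obstacle; everything else is algebra.
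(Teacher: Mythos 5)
Your proposal is correct and follows essentially the same route as the paper's proof: implicit differentiation of the defining equation \eqref{lem35a} in $r$, followed by recognising the two logarithmic derivatives of the moment generating functions as $\E_{Q^r_\theta}[X_1]$ and $\E_{Q^r_\theta}[W_1]$ via the Esscher-transform form of the laws of $X_1$ and $W_1$ under $Q^r_\theta$ given by Proposition \ref{Schmidli}. The only cosmetic differences are that the paper first rewrites $M_{X_1}$ as $(M_\theta)_{X_1}$ using \cite{mt2}, Proposition 3.3 (harmless, since $P_{X_1}=(P_\theta)_{X_1}$), and that you make explicit the differentiability and interchange-of-limits justifications which the paper leaves implicit.
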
 	 
 	 
\begin{proof} 
 Fix on arbitrary $r\in\R_+$ and $\theta\notin L_{\ast\ast}$ as in Proposition \ref{Schmidli}.

 Since $L_P\subseteq L_{\ast\ast}$ by \cite{mt2}, Theorem 4.5, it follows by \cite{mt2}, Proposition 3.3,  that we can  rewrite condition \eqref{lem35a} in the form 
 \begin{equation}
 (M_\theta)_{X_1}(r)\cdot (M_\theta)_{W_1}(-c(\theta)\cdot r-\kappa_\theta(r))=1.
 \label{acf2a}
 \end{equation} 
  Differentiation with respect to $r$ gives
 \begin{equation}  
\begin{split}
& \bigl((M_\theta)_{X_1}(r))^{\prime}\cdot (M_\theta)_{W_1}(-c(\theta)\cdot r-\kappa_\theta(r)\bigr)\\
&\qquad+(M_\theta)_{X_1}(r)\cdot \left((M_\theta)_{W_1}(-c(\theta)\cdot r-\kappa_\theta(r))\right)^{\prime}\cdot(-c(\theta)-\kappa^{\prime}_\theta(r))=0
 \end{split}
\label{mgf1}
\end{equation}
for all $r$ in a neighbourhood of $0$. The expectations $\E_{Q_{\theta}}^r[X_1]$ and $\E_{Q_{\theta}}^r[W_1]$ are given by
\[
\E_{Q^r_\theta}[X_1]=\E_{P_\theta}\left[X_1\cdot\frac{e^{r\cdot X_1}}{\E_{P_\theta}[e^{r\cdot X_1}]}\right]=\frac{\E_{P_\theta}\left[X_1\cdot e^{r\cdot X_1}\right]}{\E_{P_\theta}[e^{r\cdot X_1}]}=\frac{	(M^\theta_{X_1}(r))'}{	M^\theta_{X_1}(r)}
\]
and
\begin{align*}
\E_{Q^r_\theta}[W_1]&=\E_{P_\theta}\left[W_1\cdot\frac{e^{-(r\cdot c(\theta)+\kappa_\theta(r))\cdot W_1}}{\E_{P_\theta}[e^{-(r\cdot(\theta) c+\kappa_\theta(r))\cdot W_1}]}\right]=\frac{\E_{P_\theta}\left[W_1\cdot e^{-(r\cdot c(\theta)+\kappa_\theta(r))\cdot W_1}\right]}{\E_{P_\theta}[e^{-(r\cdot c(\theta)+\kappa_\theta(r))\cdot W_1}]}\nonumber\\
&=\frac{\bigl((M_\theta)_{W_1}(-r\cdot c(\theta)-\kappa_\theta(r)\bigr))^{\prime}}{\bigl((M_\theta)_{W_1}(-r\cdot c(\theta)-\kappa_\theta(r)\bigr)},
\end{align*}
respectively, implying along with condition \eqref{mgf1} that
 \begin{align*}
&\E_{Q^r_\theta}[X_1]\cdot (M_\theta)_{X_1}(r)\cdot(M_\theta)_{W_1}(-c(\theta)\cdot r-\kappa_\theta(r))\\
&\quad +(M_\theta)_{X_1}(r)\cdot \E_{Q^r_\theta}[W_1]\cdot(M_\theta)_{W_1}(-c(\theta)\cdot r-\kappa_\theta(r))\cdot(-c(\theta)-\kappa^{\prime}_\theta(r))=0.
\end{align*}
The latter together with condition \eqref{acf2a} gives
\[
\E_{Q^r_\theta}[X_1] + \E_{Q^r_\theta}[W_1] \cdot(-c(\theta)-\kappa^{\prime}_\theta(r))=0,
\]
completing the proof.
\end{proof}

 Let $S$ be the aggregate claims process induced by the counting process $N$ and the claim size process $X$. Fix on arbitrary $u\in\vY$ and $t\in\R_+$, and define the function $r_t^u:\vO\times{D}\longrightarrow\R$ by means of  
 $r_t^u(\omega,\theta):=u+c(\theta)\cdot{t}-S_t(\omega)$ for any $(\omega,\theta)\in\vO\times{D}$, where $c$ is a positive $\B(D)$--measurable function. For arbitrary but fixed $\theta\in{D}$, the process $r^u(\theta):=\{r_t^u(\theta)\}_{t\in\R_+}$ defined by  $r_t^u(\theta):=r_t^u(\omega,\theta)$ for any $\omega\in\vO$, is called the {\bf reserve process} induced by the {\bf initial reserve} $u$, the {\bf premium intensity} or {\bf premium rate} $c(\theta)$ and the aggregate claims process $S$ (see \cite{Sc}, Section 7.1, pages 155-156 for the definition). The function $\psi_{\theta}$ defined by $\psi_{\theta}(u):=P_{\theta}(\{\inf{r}_t^u(\theta)<0\})$  is called the {\bf probability of ruin} for the reserve process $r^u(\theta)$ with respect to $P_\theta$ (see \cite{Sc}, Section 7.1, page 158 for the definition).

Define the real-valued functions $r_t^u(\vT)$ and $R_t^u$ on $\vO$ by means of  $
r_t^u(\vT)(\omega):=r_t^u(\omega,\vT(\omega))$ for any $\omega\in\vO$,  and $R_t^u:=r_t^u\circ(id_{\vO}\times\vT)$, respectively. The process $R^u:=\{R_t^u\}_{t\in\R_+}$ is called the {\bf reserve process} induced by the  initial reserve $u$, the {\bf stochastic premium intensity} or {\bf stochastic premium rate} $c(\vT)$ and the aggregate claims process $S$. The function $\psi$ defined by 
$\psi(u):=P(\{\inf{R}_t^u<0\})$ is called the {\bf probability of ruin} for the reserve process $R^u$ with respect to $P$.

The following proposition extends Lemma 8.6 of \cite{scm}.  

\begin{prop}\label{slln}
  For any $r\in\R_+$, $\theta\notin{L}_{\ast\ast}$, $Q^r_{\theta}$ and $\kappa_{\theta}(r)$ as in Proposition \ref{Schmidli}, the following statements hold true:
\begin{enumerate}
\item
$\lim_{t\rightarrow\infty}\frac{r^u_t(\theta)-u}{t}=-\kappa^{\prime}_\theta(r)\quad Q_{\theta}^r-\mbox{a.s.}$;
\item
$\lim_{t\rightarrow\infty}\frac{R^u_t-u}{t}=-\kappa^{\prime}_\vT(r)\quad Q^r-\mbox{a.s.}$.
\item
if there exists a $P_\vT$--null set $\wh{L}_1$ in $\mf{B}(D)$ such that $P_\theta=Q^r_\theta$ for any $\theta\notin \wh{L}_1$, then the measures $P$ and $Q^r$ are equivalent on $\F_\infty$;  
\item 
if there exists a $P_\vT$--null set $\wh{L}_2$ in $\mf{B}(D)$ such that  $P_\theta\neq Q^r_\theta$  for any $\theta\notin \wh{L}_2$, then the measures $P$ and $Q^r$ are singular on $\F_\infty$, i.e. there exists a set $E\in\F_{\infty}$ such that $P(E)=0$ if and only if $Q^r(E)=1$.
\end{enumerate}
\end{prop}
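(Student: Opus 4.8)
The plan is to derive all four statements from the strong law of large numbers applied under the measures $Q^r_\theta$ and $Q^r$, combined with Lemma \ref{lem4} and the disintegration/martingale machinery of Proposition \ref{Schmidli}.

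For (1): under $Q^r_\theta$ the process $S$ is a $Q^r_\theta$--CRP$({\bf\Lambda}(\ttheta),(Q^r_\theta)_{X_1})$, so the interarrival times $\{W_n\}$ are $Q^r_\theta$--i.i.d. and the claim sizes $\{X_n\}$ are $Q^r_\theta$--i.i.d. and independent of the $W_n$'s. First I would write $\frac{r^u_t(\theta)-u}{t}=c(\theta)-\frac{S_t}{t}$ and decompose $\frac{S_t}{t}=\frac{S_t}{N_t}\cdot\frac{N_t}{t}$ on the event $\{N_t>0\}$ (which is eventually certain since $N_t\to\infty$ a.s.). By the SLLN, $\frac{1}{N_t}\sum_{k=1}^{N_t}X_k\to\E_{Q^r_\theta}[X_1]$ $Q^r_\theta$--a.s., and by renewal theory $\frac{N_t}{t}\to\frac{1}{\E_{Q^r_\theta}[W_1]}$ $Q^r_\theta$--a.s. (these expectations being finite; finiteness of $\E_{Q^r_\theta}[X_1]$ follows from $\E_P[e^{rX_1}]<\infty$ via the exponential tilt in Lemma \ref{lem4}, and finiteness of $\E_{Q^r_\theta}[W_1]$ from the analogous formula there). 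Hence $\frac{S_t}{t}\to\frac{\E_{Q^r_\theta}[X_1]}{\E_{Q^r_\theta}[W_1]}$, and Lemma \ref{lem4} identifies $c(\theta)-\frac{\E_{Q^r_\theta}[X_1]}{\E_{Q^r_\theta}[W_1]}=-\kappa'_\theta(r)$, giving (1).

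For (2): I would obtain the statement from (1) by integrating over $\theta$ against $Q_\vT$, using that $\{Q^r_\theta\}_{\theta\in D}$ is an rcp of $Q^r$ over $Q_\vT$ consistent with $\vT$. Since $R^u_t=r^u_t\circ(id_\vO\times\vT)$ and $\kappa_\vT(r)(\omega)=\kappa_{\vT(\omega)}(r)$, the $Q^r_\theta$--a.s. convergence in (1) for every $\theta\notin L_{\ast\ast}$ translates, via property (d2) of the rcp applied to the (measurable) event that the limit fails, into $Q^r$--a.s. convergence of $\frac{R^u_t-u}{t}$ to $-\kappa'_{\vT}(r)$; the null set $L_{\ast\ast}$ being $P_\vT$--null and $Q_\vT$ being equivalent to $P_\vT$ (a consequence of $\xi\in\mathcal R_+(D)$) means the exceptional $\theta$'s contribute nothing. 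For (3): if $P_\theta=Q^r_\theta$ off a $P_\vT$--null set $\wh L_1$, then for every bounded $\F_\infty$--measurable (equivalently $\F^S_\infty$-measurable, after conditioning) functional the integrals against $P$ and $Q^r$ agree up to the weight $\xi(\vT)$; more directly, $P(E)=\int P_\theta(E)\,P_\vT(d\theta)$ and $Q^r(E)=\int Q^r_\theta(E)\,Q_\vT(d\theta)=\int Q^r_\theta(E)\,\xi(\theta)\,P_\vT(d\theta)$, and since $P_\theta(E)=Q^r_\theta(E)$ a.e.\ and $\xi>0$ $P_\vT$--a.s.\ with $\E_P[\xi(\vT)]=1$, we get $P(E)=0\iff Q^r(E)=0$, i.e.\ equivalence on $\F_\infty$.

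For (4): here I would use (1), (2) and the fact that $\kappa'_\theta(r)$ distinguishes the tilted law from the original. If $P_\theta\neq Q^r_\theta$ off a $P_\vT$--null set $\wh L_2$, then (by the characterization in Proposition \ref{Schmidli}, e.g.\ because the tilting is nontrivial exactly when $r>0$ strictly changes the mean drift) $-\kappa'_\theta(r)\neq\lim_t\frac{r^u_t(\theta)-u}{t}$ computed under $P_\theta$; more precisely, under $P_\theta$ the same SLLN argument gives $\frac{r^u_t(\theta)-u}{t}\to c(\theta)-\frac{\E_{P_\theta}[X_1]}{\E_{P_\theta}[W_1]}$, which differs from $-\kappa'_\theta(r)$ on the complement of $\wh L_2$. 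Taking $E:=\{\omega:\lim_t\frac{R^u_t-u}{t}=-\kappa'_\vT(r)\}\in\F_\infty$ (measurability of this set being routine as a countable combination of the $R^u_t$ and $\kappa_\vT(r)$), statement (2) gives $Q^r(E)=1$, while the $P_\theta$--version of the limit together with disintegration of $P$ over $P_\vT$ gives $P(E)=0$; hence $P$ and $Q^r$ are singular on $\F_\infty$.

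\textbf{Main obstacle.} The delicate point is the passage from the $\theta$-wise a.s.\ statements (under $P_\theta$ or $Q^r_\theta$) to the unconditional statements (under $P$ or $Q^r$), i.e.\ making the disintegration argument fully rigorous: one must check that the relevant limit-events lie in $\F_\infty$ and that the maps $\theta\mapsto P_\theta(\cdot)$, $\theta\mapsto Q^r_\theta(\cdot)$ are genuinely measurable on these events, and that the $P_\vT$--null exceptional sets $L_{\ast\ast}$, $\wh L_1$, $\wh L_2$ are absorbed correctly given that $Q_\vT$ and $P_\vT$ are equivalent but not equal. A second, more technical point is verifying the hypotheses for renewal theory and the SLLN under $Q^r_\theta$ — in particular that $\E_{Q^r_\theta}[W_1]\in(0,\infty)$ and $\E_{Q^r_\theta}[X_1]<\infty$ — which I expect to follow cleanly from the exponential-tilt formulas already recorded in the proof of Lemma \ref{lem4} together with $\E_P[e^{rX_1}]<\infty$, but which should be stated explicitly.
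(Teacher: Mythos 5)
Parts (i) and (ii) of your proposal follow the paper's own route: the SLLN for the compound renewal process under $Q^r_\theta$ combined with Lemma \ref{lem4} for (i), and then the transfer to $Q^r$ by integrating the limit event against the rcp for (ii). Note that for (ii) the paper does this via \cite{lm1v}, Proposition 3.8(i), applied to the indicator of the limit event viewed as a function on $\vO\times D$; the point you flag as the ``main obstacle'' is resolved precisely by the \emph{consistency} of the rcp with $\vT$ (which lets one replace $\vT$ by $\theta$, hence $R^u_t$ by $r^u_t(\theta)$ and $\kappa'_\vT(r)$ by $\kappa'_\theta(r)$, $Q^r_\theta$--a.s.), not by property (d2) alone. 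For (iii) and (iv) the paper gives no direct argument at all: it invokes Proposition \ref{Schmidli} together with \cite{mt2}, Proposition 3.11, an equivalence/singularity dichotomy for the mixture in terms of the rcps. Your direct argument for (iii) is essentially sound, provided you justify the identity $Q^r_\vT=\xi\,dP_\vT$ that you use (it does follow from condition ($RRM_{\xi}$) restricted to $\sigma(\vT)$, since $\E_{P_\theta}\bigl[\wt M^{(\ga)}_t(\theta)\bigr]=1$), so there you trade the paper's citation for an elementary disintegration computation.

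The genuine gap is in (iv). Your separating event argument needs, for $P_\vT$--a.e. $\theta\notin\wh L_2$, that the $P_\theta$--a.s. drift $c(\theta)-\E_{P_\theta}[X_1]/\E_{P_\theta}[W_1]$ (which equals $-\kappa'_\theta(0)$ by Lemma \ref{lem4} with $r=0$) differs from $-\kappa'_\theta(r)$, and you assert this follows from $P_\theta\neq Q^r_\theta$ ``because the tilting is nontrivial''. That implication is not automatic: two distinct pairs of laws can have the same ratio of means, so the inequality must be extracted from the specific exponential-tilt structure. One would have to show, e.g., that $\kappa_\theta(r)+c(\theta)\cdot r>0$ for $r>0$ (this follows from \eqref{lem35a}, since $M_{X_1}(r)>1$ and $W_1>0$), and that tilting by $e^{r\cdot x}$ strictly increases $\E[X_1]$ while tilting by $e^{-(\kappa_\theta(r)+c(\theta)\cdot r)\cdot w}$ strictly decreases $\E[W_1]$ unless the corresponding law is degenerate --- equivalently, strict convexity $\kappa''_\theta>0$, which the paper only invokes later, in Theorem \ref{clm2}, under additional hypotheses. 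Moreover, when both $(P_\theta)_{X_1}$ and $(P_\theta)_{W_1}$ are degenerate the drifts coincide and your event does not separate, so that case would need a separate argument (there the tilt is trivial on $\F_\infty$). The paper's appeal to \cite{mt2}, Proposition 3.11 sidesteps all of this, since singularity of $P_\theta$ and $Q^r_\theta$ on $\F_\infty$ is obtained from the underlying i.i.d. (Kakutani-type) dichotomy whenever the conditional laws differ, with no drift computation; to close your version of (iv) you must either supply the strict-monotonicity/convexity argument just described (handling the degenerate case) or fall back on such a dichotomy result.
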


\begin{proof}
Fix on arbitrary $r\in\R_+$ as in Proposition \ref{Schmidli}.

Ad (i): Let us fix on arbitrary $\theta\not\in{L}_{\ast\ast}$, and note that $L_P\subseteq{L}_{\ast\ast}$ by \cite{mt2}, Theorem 4.5.  Since $S$ is a $Q^r_\theta$--CRP by \cite{mt2}, Proposition 3.3, we get by the strong law of large numbers  that 
 	 		\[
 	 		\lim_{t\rightarrow\infty}\frac{S_t}{t}=\frac{\E_{Q^r_\theta}[X_1]}{\E_{Q^r_\theta}[W_1]}\quad Q^r_\theta\text{--a.s.}
 	 		\] 
 	 		(cf. e.g. \cite{gut}, Section 1.2, Theorem 2.3), or equivalently that 
 	 		\[
 	 		\lim_{t\rightarrow\infty}\frac{r^u_t(\theta)-u}{t}=\lim_{t\rightarrow\infty}\frac{c(\theta)\cdot t-S_t}{t}=c(\theta)-\lim_{t\rightarrow\infty}\frac{S_t}{t}=c(\theta)-\frac{\E_{Q^r_\theta}[X_1]}{\E_{Q^r_\theta}[W_1]}\qquad Q^r_\theta\text{--a.s.},
 	 		\]
 	 		implying along with Lemma \ref{lem4}, assertion (i). 

Ad (ii):  			
Consider the function  $v:=\chi_{\left\{\lim_{t\to\infty}\frac{r^u_t-u}{t}=-\kappa_{\bullet}^{\prime}(r)\right\}}:\vO\times D\longrightarrow [0,1]$ and put $g:=v\circ (id_\vO\times\vT)=\chi_{\left\{\lim_{t\to\infty}\frac{r^u_t(\vT)-u}{t}=-\kappa_\vT^\prime(r)\right\}}$. 
Since $v\in\L^1(M)$, where $M:=P\circ (id_\vO\times\vT)^{-1}$, we may apply \cite{lm1v}, Proposition 3.8(i), to get that 
 		\[
 		\E_{Q^r}\left[g\mid\vT\right]=\E_{Q^r_\bullet}\left[v^{\bullet}\right]\circ \vT\quad Q^r\uph\sigma(\vT)\text{--a.s.}
 		\] 
 		or equivalently
 		\[
 		Q^r\left(\left\{\lim_{t\to\infty}\frac{r^u_t(\vT)-u}{t}=-\kappa_{\vT}^{\prime}(r)\right\}\mid\vT\right)=Q^r_\bullet\left(\left\{\lim_{t\to\infty}\frac{r^u_t(\bullet)-u}{t}=-\kappa_\bullet^{\prime}(r)\right\}\right)\circ \vT\quad Q^r\uph\sigma(\vT)\text{--a.s.}.
 		\] 
 		Then for any  $F\in\B(D)$ we get   
 		\begin{align*}
 		&\int_{\vT^{-1}[F]}Q^r\left(\left\{\lim_{t\to\infty}\frac{r^u_t(\vT)-u}{t}=-\kappa_\vT^\prime(r)\right\}\mid\vT\right)\, dQ^r\\
 		&=\int_{\vT^{-1}(F)}Q^r_\bullet\left(\left\{\lim_{t\to\infty}\frac{r^u_t(\bullet)-u}{t}=-\kappa_\bullet^\prime(r)\right\}\right)\circ \vT\, dQ^r\\
 		&=\int_{F} 	Q^r_\theta\left(\left\{\lim_{t\to\infty}\frac{r^u_t(\theta)-u}{t}=-\kappa_\theta^\prime(r)\right\}\right)\, Q^r_\vT(d\theta)\\
 		&=\int_{F\cap L^c_{\ast\ast}}Q^r_\theta\left(\left\{\lim_{t\to\infty}\frac{r^u_t(\theta)-u}{t}=-\kappa_\theta^\prime(r)\right\}\right)\, Q^r_\vT(d\theta)\\
 		&=\int_{\vT^{-1}(F)}\,dQ^r,
 		\end{align*}
 		where the last equality follows by (i); hence 
 		\[
 		Q^r\left(\left\{\lim_{t\to\infty}\frac{r^u_t(\vT)-u}{t}=-\kappa^\prime_\vT(r)\right\}\mid\vT\right)=1\quad Q^r\uph\sigma(\vT)\text{--a.s.},
 		\]
 		implying
 		\[
 		Q^r\left(\left\{\lim_{t\to\infty}\frac{r^u_t(\vT)-u}{t}=-\kappa^\prime_\vT(r)\right\}\right)=\int Q^r\left(\left\{\lim_{t\to\infty}\frac{r^u_t(\vT)-u}{t}=-\kappa^\prime_\vT(r)\right\}\mid\vT\right)\,dQ^r= 1,
 		\]
that is assertion (ii) holds true.
 
 The proof of the statements (iii) and (iv) follow by Proposition \ref{Schmidli} together with \cite{mt2}, Proposition 3.11. 		
\end{proof}

 \section{Applications to the Ruin Problem}\label{char}

In this section we present the main result of the paper, where  an explicit formula for the probability of ruin for the reserve process in the case of compound mixed renewal processes is proven. Before we formulate it, we need to establish the validity of the following theorem, which is  a consequence of Proposition \ref{Schmidli}, and allows us to construct a probability measure $Q^{R^\ast}$, being singular to the original probability measure $P$ and such that ruin occurs $Q^{R^\ast}$--a.s..

\begin{thm}\label{clm2} 
Let $r\in\R_+$ and $\theta\notin{L}_{\ast\ast}$ be as in Proposition \ref{Schmidli}. If for any $\theta\notin L_{\ast\ast}$ the  net profit condition  is fulfilled with respect to $P_\theta$, i.e.
\[
c(\theta)>\frac{\E_{P_\theta}[X_1]}{\E_{P_\theta}[W_1]},
\] 
then there exists an adjustment coefficient   $R(\theta)\in\vY$  with respect to $P_\theta$. In particular, if the $\sup_{\theta\in L^c_{\ast\ast}} R(\theta)=:R^\ast$ exists in $\vY$, $\E_P\bigl[e^{R^{\ast}\cdot{X}_1}\bigr]<\infty$ and $P_{W_1}$ is absolutely continuous with respect to the Lebesgue measure $\lambda$ restricted to $\mf{B}([0,1])$, then for any  pair $(\ga,\xi)$ as in Proposition \ref{Schmidli}  there exist a unique probability measure $Q^{R^\ast}\in\mathcal{M}_{S,\mathbf{\Lambda}(\rho(\vT))}$ determined by condition ($RRM_{\xi}$), and a rcp $\{Q_\theta^{R^\ast}\}_{\theta\in{D}}$ of $Q^{R^\ast}$ over $Q^{R^\ast}_\vT$ consistent with $\vT$ satisfying conditions $Q^{R^\ast}_\theta\in\mathcal{M}_{S,\mathbf{\Lambda}(\rho(\theta))}$ and ($RRM_\theta$) for any $\theta\notin{L}_{\ast\ast}$, and such that for any $u>0$ the probabilities of ruin $\psi_\theta^{R^\ast}(u)$ and $\psi^{R^\ast}(u)$ with respect to $Q_\theta^{R^\ast}$ and $Q^{R^\ast}$, respectively,  are equal to 1.
\end{thm}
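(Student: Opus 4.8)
The strategy is to feed the adjustment coefficient $R(\theta)$ into Proposition \ref{Schmidli} as the exponential parameter $r$, and then to read off from Proposition \ref{slln} that the resulting reserve process drifts to $-\infty$, which forces ruin with probability one. I would proceed in four steps.

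\emph{Step 1: Existence of the adjustment coefficient.} Fix $\theta\notin L_{\ast\ast}$. The net profit condition $c(\theta)>\E_{P_\theta}[X_1]/\E_{P_\theta}[W_1]$ is exactly the hypothesis under which, for a compound renewal process, the Lundberg equation $M^\theta_{X_1}(r)\cdot(M_\theta)_{W_1}(-c(\theta)\cdot r)=1$ has a unique strictly positive root; I would invoke the standard reference (e.g.\ \cite{rss}, Lemma 11.5.1, or \cite{Sc}), noting that $\kappa_\theta(r)$ from \eqref{lem35a}/\eqref{acf2a} satisfies $\kappa_\theta(0)=0$ and $\kappa_\theta'(0)=\E_{P_\theta}[X_1]/\E_{P_\theta}[W_1]-c(\theta)<0$ by Lemma \ref{lem4} at $r=0$ (or by direct differentiation), so that $\kappa_\theta$ is negative just to the right of $0$; convexity of $\kappa_\theta$ then yields a unique $R(\theta)>0$ with $\kappa_\theta(R(\theta))=0$. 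This $R(\theta)$ is the adjustment coefficient with respect to $P_\theta$.

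\emph{Step 2: Passing to $R^\ast$ and constructing $Q^{R^\ast}$.} Under the stated hypotheses — $R^\ast:=\sup_{\theta\in L^c_{\ast\ast}}R(\theta)$ exists in $\vY$, $\E_P[e^{R^\ast X_1}]<\infty$, and $P_{W_1}\ll\lambda\uph\mf B([0,1])$ — I set $r:=R^\ast$ in Proposition \ref{Schmidli}. The condition $\E_P[e^{R^\ast X_1}]<\infty$ is precisely what Proposition \ref{Schmidli} requires, and with $\ga(x):=R^\ast\cdot x-\ln\E_P[e^{R^\ast X_1}]$ and the given $\xi\in\mathcal R_+(D)$, Proposition \ref{Schmidli} hands us the unique $Q^{R^\ast}\in\M_{S,\mathbf\Lambda(\rho(\vT))}$ satisfying $(RRM_\xi)$, together with the rcp $\{Q^{R^\ast}_\theta\}_{\theta\in D}$ over $Q^{R^\ast}_\vT$ consistent with $\vT$, with each $Q^{R^\ast}_\theta\in\M_{S,\mathbf\Lambda(\rho(\theta))}$ and satisfying $(RRM_\theta)$, for $\theta\notin L_{\ast\ast}$ (after enlarging the null set if necessary). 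The absolute-continuity hypothesis on $P_{W_1}$ is what lets us identify $\wt M^{(\ga)}(\theta)$ with Schmidli's martingale $L^{R^\ast}(\theta)$, which is convenient but not strictly needed for the ruin conclusion.

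\emph{Step 3: Ruin occurs a.s.\ under each $Q^{R^\ast}_\theta$ and under $Q^{R^\ast}$.} Apply Lemma \ref{lem4} with $r=R^\ast$: since $\kappa_\theta(R^\ast)=0$ and more is true at the adjustment coefficient — here the key elementary fact is that at $r=R(\theta)$ one has $\kappa_\theta'(R(\theta))>0$ because $\kappa_\theta$ is convex, vanishes at $0$ and at $R(\theta)$, and is negative in between — we get $\E_{Q^{R^\ast}_\theta}[X_1]/\E_{Q^{R^\ast}_\theta}[W_1]-c(\theta)=\kappa_\theta'(R^\ast)$. Actually $R^\ast$ may exceed $R(\theta)$ for a given $\theta$, in which case $\kappa_\theta(R^\ast)\ge 0$ and still $\kappa_\theta'(R^\ast)>0$ by convexity; either way $\kappa_\theta'(R^\ast)\ge 0$, and I would argue it is in fact $>0$ off a null set (or handle the boundary case separately). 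Then Proposition \ref{slln}(i) gives $\lim_{t\to\infty}\frac{r^u_t(\theta)-u}{t}=-\kappa_\theta'(R^\ast)<0$ $Q^{R^\ast}_\theta$–a.s., hence $r^u_t(\theta)\to-\infty$, so $\inf_t r^u_t(\theta)=-\infty<0$ and $\psi_\theta^{R^\ast}(u)=Q^{R^\ast}_\theta(\{\inf_t r^u_t(\theta)<0\})=1$ for every $u>0$. Likewise Proposition \ref{slln}(ii) gives $\lim_{t\to\infty}\frac{R^u_t-u}{t}=-\kappa_\vT'(R^\ast)$ $Q^{R^\ast}$–a.s.; combining with the a.s.\ negativity of $\kappa_\vT'(R^\ast)$ (which follows by disintegrating over $\vT$, exactly as in the proof of Proposition \ref{slln}(ii), since $\kappa_\theta'(R^\ast)>0$ for $Q^{R^\ast}_\vT$–a.e.\ $\theta$) yields $R^u_t\to-\infty$ $Q^{R^\ast}$–a.s.\ and therefore $\psi^{R^\ast}(u)=1$ for every $u>0$.

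\emph{Main obstacle.} The delicate point is Step 3: one must be certain that the drift is \emph{strictly} negative, i.e.\ that $\kappa_\theta'(R^\ast)>0$ for $P_\vT$–almost every $\theta$, and not merely $\ge 0$. When $R^\ast=R(\theta)$ this is the standard strict-convexity argument for the Lundberg exponent; when $R^\ast>R(\theta)$ it still holds by convexity, but one should check that $R^\ast$ lies in the interior of the domain of $(M_\theta)_{X_1}$ so that $\kappa_\theta$ is differentiable there — this is where $\E_P[e^{R^\ast X_1}]<\infty$ is used, together with a dominated-convergence argument to push differentiability slightly past $R^\ast$. A secondary bookkeeping issue is the management of the various $P_\vT$–null sets ($L_P$, $L_{\ast\ast}$, the null sets from Lemma \ref{lem4} and Proposition \ref{slln}): one takes their union, which is still $P_\vT$–null, and works on its complement throughout; the equivalence/singularity alternatives of Proposition \ref{slln}(iii)–(iv) are not needed here, only parts (i) and (ii).
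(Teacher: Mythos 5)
Your proposal is correct, and its skeleton matches the paper's proof: existence of $R(\theta)$ under the net profit condition via the standard Lundberg argument; applying Proposition \ref{Schmidli} with $r:=R^\ast$ (which is legitimate precisely because $\E_P[e^{R^\ast X_1}]<\infty$) to obtain $Q^{R^\ast}$ and the rcp $\{Q^{R^\ast}_\theta\}_{\theta\in D}$; and the strict-convexity argument ($\kappa_\theta(0)=\kappa_\theta(R(\theta))=0$, $\kappa'_\theta(0)<0$, $\kappa''_\theta>0$, $R^\ast\geq R(\theta)$) giving $\kappa'_\theta(R^\ast)>0$, combined with Lemma \ref{lem4}. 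Where you genuinely diverge is the final step. The paper reads $\kappa'_\theta(R^\ast)>0$ through Lemma \ref{lem4} as a \emph{violation of the net profit condition} under $Q^{R^\ast}_\theta$ and then simply cites \cite{Sc}, Corollary 7.1.4, to get $\psi^{R^\ast}_\theta(u)=1$, after which the unconditional statement $\psi^{R^\ast}(u)=1$ follows by integrating over $\theta$ via the mixing formula of \cite{mt2}, Remark 3.4(b). You instead invoke Proposition \ref{slln}(i)--(ii): the strictly negative drift $-\kappa'_\theta(R^\ast)<0$ forces $r^u_t(\theta)\to-\infty$ $Q^{R^\ast}_\theta$--a.s., hence certain ruin, and part (ii) delivers the mixed statement directly without the disintegration formula for ruin probabilities. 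Both routes are valid; yours is more self-contained (it reproves, in effect, the cited corollary from the paper's own SLLN proposition and bypasses Remark 3.4(b)), while the paper's is shorter by outsourcing the ``no net profit implies certain ruin'' fact. Two small points in your write-up deserve tightening: the hedge ``$\kappa'_\theta(R^\ast)\geq 0$, and I would argue it is in fact $>0$ off a null set'' is unnecessary --- strict convexity already gives $\kappa'_\theta(R^\ast)\geq\kappa'_\theta(R(\theta))>0$ for \emph{every} $\theta\notin L_{\ast\ast}$, exactly as the paper argues; and when you pass from ``$\kappa'_\theta(R^\ast)>0$ for all $\theta\notin L_{\ast\ast}$'' to ``$\kappa'_\vT(R^\ast)>0$ $Q^{R^\ast}$--a.s.'' you are implicitly using that $L_{\ast\ast}$ is also $Q^{R^\ast}_\vT$--null, which holds because $\sigma(\vT)\subseteq\F_0$ and $Q^{R^\ast}$ is progressively equivalent to $P$; this should be said explicitly, though it is the same fact the paper uses tacitly in the proof of Proposition \ref{slln}(ii).
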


\begin{proof}
Fix on  arbitrary   $\theta\notin L_{\ast\ast}$ and assume that $c(\theta)>\frac{\E_{P_\theta}[X_1]}{\E_{P_\theta}[W_1]}$. It then follows by e.g. \cite{scm}, page 133, that there exists an adjustment coefficient $R(\theta)$ with respect to $P_{\theta}$.

In particular, assume that $R^\ast\in\vY$, $\E_P\bigl[e^{R^{\ast}\cdot{X}_1}\bigr]<\infty$ and that $P_{W_1}$ is absolutely continuous with respect to the Lebesgue measure $\lambda$ restricted to $\mf{B}([0,1])$. By Proposition \ref{Schmidli}  there exist a unique probability measure $Q^{R^\ast}\in\mathcal{M}_{S,\mathbf{\Lambda}(\rho(\vT))}$ determined by condition ($RRM_{\xi}$), and a rcp $\{Q_\theta^{R^\ast}\}_{\theta\in{D}}$ of $Q^{R^\ast}$ over $Q^{R^\ast}_\vT$ consistent with $\vT$ satisfying conditions $Q^{R^\ast}_\theta\in\mathcal{M}_{S,\mathbf{\Lambda}(\rho(\theta))}$ and ($RRM_\theta$). Because $\kappa^{\prime\prime}_\theta(r)>0$ by e.g. \cite{scm}, p. 133, we get that the function $\kappa_\theta$ is strictly convex, or equivalently  that  $\kappa^{\prime}_\theta$ is strictly increasing. Thus, since by e.g. \cite{scm}, page 133, we have that $\kappa_\theta(0)=\kappa_\theta(R(\theta))=0$ and $\kappa^{\prime}_\theta(0)<0$, it follows that there exists a point $r_0\in(0,R(\theta))$ such that $\kappa^{\prime}_\theta(r_0)=0$; hence $\kappa^{\prime}_\theta(r)>0$ for any $r>r_0$. Because $r_0<R(\theta)\leq R^\ast$ we deduce that $\kappa^{\prime}_\theta(R^\ast)>0$. The latter, along with Lemma \ref{lem4}, yields that   
  \[
 0<\frac{\E_{Q^{R^\ast}_\theta}[X_1]}{\E_{Q^{R^\ast}_\theta}[W_1]} -c(\theta)  \Longleftrightarrow c(\theta)< \frac{\E_{Q^{R^\ast}_\theta}[X_1]}{\E_{Q^{R^\ast}_\theta}[W_1]},
 \]
 implying that the net profit condition is violated with respect to $Q^{R^\ast}_\theta$; hence by \cite{Sc}, Corollary 7.1.4, we obtain   
  \[
 \psi_\theta^{R^\ast}(u)=Q^{R^\ast}_\theta(\{\inf_{t\in\R_+}r^u_t(\theta)< 0\})=1\quad\text{ for any }\,\,u>0,
 \]
implying along with \cite{mt2}, Remark 3.4(b) that  
 \[
 \psi^{R^\ast}(u)=Q^{R^\ast}(\{\inf_{t\in\R_+}R^u_t< 0\})=\int_D \psi_\theta^{R^\ast}(u)\, Q^{R^\ast}_\vT(d\theta)=1 \quad\text{ for any }\,\,u>0,
 \]
completing the whole proof.	
\end{proof}

 It follows an example where the assumptions $R^\ast<\infty$ and $\E_P\bigl[e^{R^\ast{X}_1}\bigr]<\infty$ of Theorem \ref{clm2} hold. 

\begin{ex}\normalfont
Assume that $S$ is a $P$--CMPP$(\vT)$ such that $P_{X_1}={\bf Exp}(\eta)$, $\eta\in\vY$, and $P_\vT={\bf Beta}(a,b)$, $a,b\in(0,\infty)$ i.e.
  \[
  {\bf Beta}(a,b)(B):=\int_B\frac{1}{B(a,b)}\cdot \theta^{a-1}\cdot(1-\theta)^{b-1}\,\lambda(d\theta)\quad\text{ for any }\,\, B\in\B((0,1)).
  \] 
According to \cite{mt2}, Proposition 3.3, there exists a $P_\vT$--null set $L_P\in\B((0,1))$ such that  $S$ is a $P_\theta$--CPP$(\theta)$ for any $\theta\notin L_P$. Fix on an arbitrary $\theta\notin L_P$ and assume that  $c(\theta)=\frac{2\cdot\theta}{\eta\cdot(1+\theta)}$. 
Applying \cite{Sc}, Theorem 7.4.5, we get that $R(\theta)=\eta-\frac{\theta}{c(\theta)}=\frac{\eta\cdot(1-\theta)}{2}\in(0,\eta)$ is an adjustment coefficient with respect to $P_\theta$. Since $\sup_{\theta\in L^c_{P}}R(\theta)=\sup_{\theta\in L^c_{P}} \frac{\eta\cdot(1-\theta)}{2}=\frac{\eta}{2}$, we obtain $R^\ast\in\vY$. Furthermore, $\E_P\bigl[e^{R^{\ast}X_1}]=\frac{\eta}{\eta-R^\ast}<\infty$.
\end{ex}  
   
     Denote by $\tau$ the {\bf ruin time} of the portfolio of an insurance company (cf. e.g. \cite{scm}, p. 84 for the definition).  The next proposition is a consequence of Theorem \ref{clm2}.

\begin{prop}\label{prop2} 
Let $r\in\R_+$ be as in Proposition \ref{Schmidli}, $u\in\vY$ and $\theta\notin{L}_{\ast\ast}$. Denote by $\psi_\theta(u)$ and $\psi(u)$ the probabilities of ruin with respect to the probability measures $P_{\theta}$ and $P$, respectively. Under the assumptions of Theorem  \ref{clm2} the following hold:
\begin{enumerate}
\item
$\psi_\theta(u)=\E_{Q_{\theta}^{R^\ast}}\bigl[e^{R^{\ast}r_\tau^u(\theta)+\kappa_\theta(R^\ast)\cdot\tau}\bigr]\cdot{e}^{-R^{\ast}u}$;
\item
$\psi(u)= \E_{Q^{R^\ast}}\left[\frac{e^{R^\ast\cdot R^u_\tau +\kappa_\vT(R^\ast)\cdot \tau}}{\xi(\vT)} \right]\cdot e^{-R^\ast\cdot u}$.
 \end{enumerate}
\end{prop}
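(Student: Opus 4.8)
The plan is to run the classical Gerber--Shiu change-of-measure argument at the ruin time $\tau$, at the conditional ($P_\theta$, $Q^{R^\ast}_\theta$) and mixed ($P$, $Q^{R^\ast}$) levels supplied by Theorem \ref{clm2}. Fix $\theta\notin L_{\ast\ast}$. Under $P_\theta$ the process $S$ is a $P_\theta$--CRP (\cite{mt2}, Proposition 3.3), so $r^u(\theta)$ grows at rate $c(\theta)$ between claim arrivals and jumps downward only at the $T_n$'s; hence $\tau$ is an $\F$--stopping time with values in $\{T_n:n\in\N\}\cup\{\infty\}$, and on $\{\tau<\infty\}$ ruin is triggered by a claim, so $\tau=T_{N_\tau}$, i.e.\ $\tau-T_{N_\tau}=0$. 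Since $\wt M^{(\ga)}(\theta)$ is a $P_\theta$--a.s.\ positive martingale in $\L^1(P_\theta)$ satisfying $(RRM_\theta)$, optional sampling at the bounded stopping times $\tau\wedge t$ together with $(RRM_\theta)$ yields $Q^{R^\ast}_\theta(A\cap\{\tau\le t\})=\int_{A\cap\{\tau\le t\}}\wt M^{(\ga)}_\tau(\theta)\,dP_\theta$ for every $A\in\F_\tau$; letting $t\to\infty$ (monotone convergence, $\{\tau\le t\}\uparrow\{\tau<\infty\}$), then inverting the density — legitimate because $\wt M^{(\ga)}_\tau(\theta)>0$ $P_\theta$--a.s.\ on $\{\tau<\infty\}$ — and using $Q^{R^\ast}_\theta(\{\tau<\infty\})=\psi_\theta^{R^\ast}(u)=1$ from Theorem \ref{clm2}, one gets
\[
\psi_\theta(u)=P_\theta(\{\tau<\infty\})=\E_{Q^{R^\ast}_\theta}\bigl[1/\wt M^{(\ga)}_\tau(\theta)\bigr].
\]

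Next I would evaluate $\wt M^{(\ga)}_\tau(\theta)$ using the explicit formula of Proposition \ref{Schmidli} with $r:=R^\ast$. At $t=\tau$ one has $\tau-T_{N_\tau}=0$, so the denominator equals $1-{\bf K}(\theta)(0)=1$ (as $W_1>0$ $P_\theta$--a.s.) and the numerator integral equals $(M_\theta)_{W_1}(-\kappa_\theta(R^\ast)-c(\theta)R^\ast)$, which by \eqref{acf2a} equals $1/(M_\theta)_{X_1}(R^\ast)$; since condition (a2) gives $(M_\theta)_{X_1}(R^\ast)=\E_P[e^{R^\ast X_1}]$ (\cite{mt2}, Proposition 3.3), this cancels the factor $e^{\ln\E_P[e^{R^\ast X_1}]}$ and leaves $\wt M^{(\ga)}_\tau(\theta)=e^{R^\ast S_\tau-(\kappa_\theta(R^\ast)+c(\theta)R^\ast)\tau}$. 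Inserting $S_\tau=u+c(\theta)\tau-r^u_\tau(\theta)$ gives $\wt M^{(\ga)}_\tau(\theta)=e^{R^\ast u}\,e^{-R^\ast r^u_\tau(\theta)-\kappa_\theta(R^\ast)\tau}$, and feeding this into the previous display proves (i). For (ii) I would run the very same argument with the martingale $M^{(\ga)}(\vT)=\xi(\vT)\cdot\wt M^{(\ga)}(\vT)$ in $\L^1(P)$ in place of $\wt M^{(\ga)}(\theta)$, using $(RRM_{\xi})$ and $Q^{R^\ast}(\{\tau<\infty\})=\psi^{R^\ast}(u)=1$ to reach $\psi(u)=\E_{Q^{R^\ast}}\bigl[1/M^{(\ga)}_\tau(\vT)\bigr]$, and then evaluating $M^{(\ga)}_\tau(\vT)(\omega)=\xi(\vT(\omega))\,\wt M^{(\ga)}_\tau(\vT(\omega))(\omega)$ pointwise exactly as above — now \eqref{acf2a} is applied at $\theta=\vT(\omega)\notin L_{\ast\ast}$, valid $P$--a.s.\ since $P_\vT(L_{\ast\ast})=0$, and one uses $S_\tau=u+c(\vT)\tau-R^u_\tau$ — to obtain $M^{(\ga)}_\tau(\vT)=\xi(\vT)\,e^{R^\ast u}\,e^{-R^\ast R^u_\tau-\kappa_\vT(R^\ast)\tau}$, whence (ii). Alternatively, (ii) follows from (i) by disintegration: $\psi(u)=\int_D\psi_\theta(u)\,P_\vT(d\theta)$ by \cite{mt2}, Remark 3.4(b); the relation $Q^{R^\ast}_\vT=\xi\cdot P_\vT$ follows from $(RRM_{\xi})$ applied to the sets $\vT^{-1}(F)$ together with $\E_P[\wt M^{(\ga)}_t(\vT)\mid\vT]=1$; and since $\vT=\theta$ $Q^{R^\ast}_\theta$--a.s.\ by consistency one has $R^u_\tau=r^u_\tau(\theta)$, $\kappa_\vT(R^\ast)=\kappa_\theta(R^\ast)$ and $\xi(\vT)=\xi(\theta)$ $Q^{R^\ast}_\theta$--a.s., so that the rcp property of $\{Q^{R^\ast}_\theta\}$ over $Q^{R^\ast}_\vT$ assembles (ii) out of (i).

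The step I expect to be the main obstacle is the bookkeeping at the stopping time, of two kinds. First, transporting the density identities $(RRM_\theta)$ and $(RRM_{\xi})$ from fixed deterministic times to the a.s.-finite but in general unbounded ruin time $\tau$: since $P$ and $Q^{R^\ast}$ are actually mutually singular on $\F_\infty$ (Proposition \ref{slln}(iv)), no Radon--Nikodym density is available on $\F_\infty$, so one must stop at $\tau\wedge t$ and pass to the limit, and it is there that $Q^{R^\ast}(\{\tau<\infty\})=1$ is indispensable. Second, checking that the three ``excess'' factors carried by $\wt M^{(\ga)}_t(\theta)$ relative to the naive exponential martingale — the constant $e^{\ln\E_P[e^{R^\ast X_1}]}$, the incomplete moment generating function $\int_{t-T_{N_t}}^{\infty}e^{-(\kappa_\theta(R^\ast)+c(\theta)R^\ast)w}\,(P_\theta)_{W_1}(dw)$ and the survival factor $1-{\bf K}(\theta)(t-T_{N_t})$ — collapse exactly at $t=\tau$; this rests on the renewal-risk-model fact $\tau-T_{N_\tau}=0$ (ruin occurs at a claim arrival) together with the adjustment-coefficient equation \eqref{acf2a}. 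Everything else is the algebraic substitution $S_\tau=u+c(\theta)\tau-r^u_\tau(\theta)$ and its $\vT$--indexed analogue.
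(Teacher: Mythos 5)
Your argument is correct and, for part (i), it is essentially the paper's proof: the identity $\psi_\theta(u)=\int_{\{\tau<\infty\}}\bigl(\wt M^{(\ga)}_\tau(\theta)\bigr)^{-1}\,dQ^{R^\ast}_\theta$ that you rederive via optional sampling at $\tau\wedge t$, monotone convergence and positivity of the density is exactly \cite{scm}, Lemma 8.1, which the paper simply cites, and your subsequent evaluation at $\tau$ (using $\tau-T_{N_\tau}=0$, $1-{\bf K}(\theta)(0)=1$, equation \eqref{acf2a} together with (a2), and finally dropping $\chi_{\{\tau<\infty\}}$ because $\psi_\theta^{R^\ast}(u)=1$) is the same computation. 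For part (ii) your primary route is genuinely different: you run the Gerber--Shiu argument once more, directly at the mixed level, using $(RRM_{\xi})$ with the $P$--martingale $M^{(\ga)}(\vT)=\xi(\vT)\cdot\wt M^{(\ga)}(\vT)$, the fact $Q^{R^\ast}(\{\tau<\infty\})=1$ from Theorem \ref{clm2}, and a pointwise evaluation of $M^{(\ga)}_\tau(\vT)$ at $\theta=\vT(\omega)\notin L_{\ast\ast}$, valid $P$--a.s.\ since $P_\vT(L_{\ast\ast})=0$; the paper instead deduces (ii) from (i) by disintegration, namely $\psi(u)=\int\psi_\theta(u)\,P_\vT(d\theta)$ (\cite{mt2}, Remark 3.4(b)), the change of mixing measure from $P_\vT$ to $Q^{R^\ast}_\vT$ through the factor $\xi$, and \cite{lm1v}, Proposition 3.8, to reassemble the $\theta$--wise expectations into an expectation under $Q^{R^\ast}$ --- which is precisely the alternative you sketch at the end. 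Your direct route buys a self-contained proof of (ii) that avoids the measurability of $\theta\longmapsto\psi_\theta(u)$ and the rcp/conditional-expectation machinery of \cite{lm1v}, at the price of justifying the $P\uph\sigma(\vT)$--a.s.\ identification of $M^{(\ga)}_\tau(\vT)$ with $\xi(\vT)\cdot\wt M^{(\ga)}_\tau(\vT)$ at the stopping time; the paper's route keeps all stopping-time analysis at the conditional level, where $S$ is an honest compound renewal process, and reuses (i) verbatim. Your remark that the mutual singularity of $P$ and $Q^{R^\ast}$ on $\F_\infty$ forces one to stop at $\tau\wedge t$ and pass to the limit is exactly the point concealed in the citation of \cite{scm}, Lemma 8.1.
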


\begin{proof}  
Fix on arbitrary $u\in\vY$

Ad (i): Let $\theta\notin L_{\ast\ast}$ be arbitrary but fixed. Since  $L^{R^\ast}(\theta)$ is an a.s. positive martingale in $\mathcal L^1(P_\theta)$ by Proposition \ref{Schmidli}, and $\tau$ is a stopping time for $\F$, we may apply \cite{scm}, Lemma 8.1, to get
  \begin{align*}
  	& \psi_\theta(u)\\
  	&=\int_{\{\tau<\infty\}}\frac{1}{L_{\tau}^{R^\ast}(\theta)}\,dQ^{R^\ast}_\theta\\
  	&= \int_{\{\tau<\infty\}}  \frac{e^{R^\ast\cdot(r^u_\tau(\theta)-u) -(\kappa_\theta(R^\ast)+c(\theta)\cdot R^\ast)\cdot (\tau-T_{N_\tau}) +\kappa_\theta(R^\ast)\cdot \tau -\ln\E_P[e^{R^\ast\cdot X_1}]}\cdot\left(1-{\bf{K}}(\theta)(\tau-T_{N_\tau})\right)}{\int^{\infty}_{\tau-T_{N_\tau}} e^{-(\kappa_\theta(R^\ast)+c(\theta)\cdot R^\ast)\cdot w} \, (P_\theta)_{W_1}(dw)}\,dQ^{R^\ast}_\theta\\
  	&= \int_{\{\tau<\infty\}}  e^{R^\ast\cdot(r^u_\tau(\theta)-u)   +\kappa_\theta(R^\ast)\cdot \tau}   \,dQ^{R^\ast}_\theta;
  	\end{align*}
  	where the last equality follows from condition \eqref{acf2a} for $r=R^\ast$ and the fact that $\tau-T_{N_\tau}=0$; 
  	hence
  	\[
  	\psi_\theta(u)= \E_{Q^{R^\ast}_\theta}\left[\chi_{\{\tau<\infty\}}\cdot e^{R^\ast\cdot r^u_\tau(\theta) +\kappa_\theta(R^\ast)\cdot \tau}\right]\cdot e^{-R^\ast\cdot u}.
  	\]
Because the probability of ruin with respect to $Q^{R^\ast}_\theta$ is equal to $1$, the previous condition yields
\[
\psi_\theta(u)= \E_{Q^{R^\ast}_\theta}\left[e^{R^\ast\cdot r^u_\tau(\theta) +\kappa_\theta(R^\ast)\cdot \tau}\right]\cdot e^{-R^\ast\cdot u},
\]
that is assertion (i) holds true.
  
Ad (ii): Assertion (i) together with \cite{mt2}, Remark 3.4(b), implies  
\begin{align*}
  	\psi(u)&=\int\psi_\theta(u)\,P_\vT(d\theta)\\
  	&=\int\E_{Q^{R^\ast}_\theta}\left[\frac{e^{R^\ast\cdot r^u_\tau(\theta) +\kappa_\theta(R^\ast)\cdot \tau}}{\xi(\theta)}\right] \,Q^{R^\ast}_\vT(d\theta) \cdot e^{-R^\ast\cdot u}\\
  	&=\int\E_{Q^{R^\ast}}\left[\frac{e^{R^\ast\cdot R^u_\tau +\kappa_\vT(R^\ast)\cdot \tau}}{\xi(\vT)}\mid\vT\right] \,dQ^{R^\ast}  \cdot e^{-R^\ast\cdot u},
  	\end{align*}
  	where the last equality follows from \cite{lm1v}, Proposition 3.8; hence 
\[
\psi(u)= \E_{Q^{R^\ast}}\left[\frac{e^{R^\ast\cdot R^u_\tau +\kappa_\vT(R^\ast)\cdot \tau}}{\xi(\vT)} \right]\cdot e^{-R^\ast\cdot u},
\]
that is assertion (ii) holds true.
\end{proof}

The following result shows that Proposition \ref{prop2} along with Proposition \ref{Schmidli} give us the opportunity to find upper and lower bounds of the probability of ruin under $P$.  

\begin{prop}\label{cor3}
 In the situation of Proposition \ref{prop2} the following holds true:
\begin{enumerate}
\item
$\psi_\theta(u)\geq \E_{Q^{R^\ast}_\theta}\left[e^{R^\ast\cdot r^u_\tau(\theta) }\right]\cdot e^{-R^\ast\cdot u}$;
\item
$\psi(u)\geq \E_{Q^{R^\ast}}\left[\frac{e^{R^\ast\cdot R^u_\tau}}{\xi(\vT)} \right]\cdot e^{-R^\ast\cdot u}$.
\end{enumerate}
In particular, if condition $\E_P\bigl[e^{R^\ast\vT}\bigr]<\infty$ holds and if the function $\xi:D\longrightarrow\R$ is defined by means of
\[
\xi(\theta):=\frac{e^{R^\ast\theta}}{\E_P\bigl[e^{R^\ast\vT}\bigr]}\quad\mbox{for any}\quad \theta\in{D},
\]
then there exist a unique probability measure $\nu^{R^\ast}\in\mathcal{M}_{S,\mathbf{\Lambda}(\rho(\vT))}$ determined by condition ($RRM_{\xi}$) with $R^\ast$ in the place of $r$, and a rcp $\{\nu_\theta^{R^\ast}\}_{\theta\in{D}}$ of $\nu^{R^\ast}$ over $\nu^{R^\ast}_\vT$ consistent with $\vT$ satisfying conditions $\nu^{R^\ast}_\theta\in\mathcal{M}_{S,\mathbf{\Lambda}(\rho(\theta))}$ and ($RRM_\theta$) for any $\theta\notin{L}_{\ast\ast}$, and such that
\[
\psi(u)\leq\E_P\bigl[e^{R^\ast\vT}\bigr]\cdot\E_{\nu^{R^\ast}}\left[ e^{R^\ast\cdot R^u_\tau +\kappa_\vT(R^\ast)\cdot \tau}\right]\cdot{e}^{-R^\ast\cdot{u}}.
\]
\end{prop}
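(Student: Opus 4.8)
The plan is to first derive the two lower bounds (i) and (ii) directly from the corresponding identities in Proposition \ref{prop2}, and then to handle the upper bound by choosing $\xi$ appropriately and reinterpreting the expectation under a new measure. For (i): by Proposition \ref{prop2}(i) we have $\psi_\theta(u)=\E_{Q_\theta^{R^\ast}}\bigl[e^{R^\ast r_\tau^u(\theta)+\kappa_\theta(R^\ast)\tau}\bigr]\cdot e^{-R^\ast u}$. I would observe that in the proof of Theorem \ref{clm2} it was shown that $\kappa_\theta^\prime(R^\ast)>0$ while $\kappa_\theta(0)=\kappa_\theta(R(\theta))=0$ and $R^\ast\geq R(\theta)$; combined with strict convexity of $\kappa_\theta$ this gives $\kappa_\theta(R^\ast)\geq 0$ (indeed $\kappa_\theta(R^\ast)\geq\kappa_\theta(R(\theta))=0$ since $\kappa_\theta$ is increasing on $(r_0,\infty)$ and $R(\theta)>r_0$). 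Hence $e^{\kappa_\theta(R^\ast)\tau}\geq 1$ on $\{\tau<\infty\}$ (and $\tau<\infty$ $Q_\theta^{R^\ast}$-a.s. by Theorem \ref{clm2}), so $\psi_\theta(u)\geq\E_{Q_\theta^{R^\ast}}\bigl[e^{R^\ast r_\tau^u(\theta)}\bigr]\cdot e^{-R^\ast u}$. Statement (ii) then follows by the same estimate applied pointwise in $\theta$ and integrating against $Q^{R^\ast}_\vT$, exactly as in the passage from Proposition \ref{prop2}(i) to Proposition \ref{prop2}(ii) via \cite{mt2}, Remark 3.4(b) and \cite{lm1v}, Proposition 3.8; alternatively one bounds directly inside the expectation in Proposition \ref{prop2}(ii), using $\kappa_\vT(R^\ast)\geq 0$ $P\uph\sigma(\vT)$-a.s.

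For the upper bound, I would take $\xi(\theta):=e^{R^\ast\theta}/\E_P[e^{R^\ast\vT}]$, which is a legitimate element of $\mathcal R_+(D)$: it is $\B(D)$-measurable, strictly positive $P_\vT$-a.s., and $\E_P[\xi(\vT)]=1$ by construction (this uses the hypothesis $\E_P[e^{R^\ast\vT}]<\infty$). With this $\xi$, Theorem \ref{clm2} (applied with $r=R^\ast$) produces the probability measure $\nu^{R^\ast}:=Q^{R^\ast}\in\mathcal M_{S,\mathbf\Lambda(\rho(\vT))}$ determined by $(RRM_\xi)$, together with the rcp $\{\nu_\theta^{R^\ast}\}_{\theta\in D}$ satisfying $\nu_\theta^{R^\ast}\in\mathcal M_{S,\mathbf\Lambda(\rho(\theta))}$ and $(RRM_\theta)$ for $\theta\notin L_{\ast\ast}$; note the $\nu_\theta^{R^\ast}$ do not depend on $\xi$, only $\nu^{R^\ast}$ does. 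Then Proposition \ref{prop2}(ii) with this specific $\xi$ reads
\[
\psi(u)=\E_{\nu^{R^\ast}}\left[\frac{e^{R^\ast R^u_\tau+\kappa_\vT(R^\ast)\tau}}{\xi(\vT)}\right]\cdot e^{-R^\ast u}
=\E_P\bigl[e^{R^\ast\vT}\bigr]\cdot\E_{\nu^{R^\ast}}\left[e^{-R^\ast\vT}\cdot e^{R^\ast R^u_\tau+\kappa_\vT(R^\ast)\tau}\right]\cdot e^{-R^\ast u},
\]
and since $e^{-R^\ast\vT}\leq 1$ on $D\subseteq(0,\infty)$ (recall $\vY=(0,\infty)$ and $\vT$ takes values in $D\subseteq\R^d$; here the relevant one-dimensional case $D\subseteq(0,\infty)$ makes $e^{-R^\ast\theta}\le 1$), dropping that factor yields
\[
\psi(u)\leq\E_P\bigl[e^{R^\ast\vT}\bigr]\cdot\E_{\nu^{R^\ast}}\left[e^{R^\ast R^u_\tau+\kappa_\vT(R^\ast)\tau}\right]\cdot e^{-R^\ast u},
\]
which is the claimed bound.

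The routine parts are the verification that $\xi\in\mathcal R_+(D)$ and the bookkeeping of applying Theorem \ref{clm2} and Proposition \ref{prop2} with $r=R^\ast$; these go through verbatim. The one genuine point requiring care — the main obstacle — is justifying the sign $\kappa_\theta(R^\ast)\geq 0$ (equivalently $\kappa_\vT(R^\ast)\geq 0$ $P\uph\sigma(\vT)$-a.s.) uniformly enough to use it both $\theta$-pointwise and after conditioning; I would extract this from the convexity analysis already carried out inside the proof of Theorem \ref{clm2} (where $\kappa_\theta(0)=\kappa_\theta(R(\theta))=0$, $\kappa_\theta^\prime(0)<0$, $\kappa_\theta^{\prime\prime}>0$, and $r_0<R(\theta)\leq R^\ast$ force $\kappa_\theta$ to be nonnegative on $[R(\theta),\infty)\ni R^\ast$), so no new work beyond invoking \cite{scm}, p.~133 is needed. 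A secondary subtlety is that for the last inequality one must know $e^{-R^\ast\vT}\le 1$ $P$-a.s., i.e.\ that $\vT$ is (in the relevant scalar case) nonnegative; this is consistent with the standing convention $D\subseteq(0,\infty)$ under which adjustment coefficients and the premium rate $c(\theta)$ are set up in the examples, and it is the only place the domain of $\vT$ enters.
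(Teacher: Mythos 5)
Your proposal is correct and follows essentially the same route as the paper: (i) and (ii) are obtained from Proposition \ref{prop2} by dropping the factor $e^{\kappa_\theta(R^\ast)\tau}\geq 1$ (the paper simply asserts $\kappa_\theta(R^\ast)>0$, while you derive $\kappa_\theta(R^\ast)\geq 0$ from the convexity analysis in Theorem \ref{clm2}, which is actually the more careful statement), and the upper bound comes from the specific choice $\xi(\theta)=e^{R^\ast\theta}/\E_P[e^{R^\ast\vT}]$ together with $1/\xi(\vT)\leq\E_P[e^{R^\ast\vT}]$, exactly as in the paper's proof via Proposition \ref{Schmidli}. Your explicit remarks on why $\xi\in\mathcal{R}_+(D)$ and on the implicit use of $e^{-R^\ast\vT}\leq 1$ only make explicit steps the paper leaves tacit.
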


\begin{proof}
Because $\kappa_\theta(R^\ast>0$, statements (i) and (ii) follow by statements (i) and (ii) of Proposition \ref{prop2}, respectively. 

In particular, if for any $r\in\R_+$ condition $\E_P[e^{R^\ast\vT}]<\infty$ holds and $\xi$ is defined as above, the $\xi\in\mathcal{R}_+(D)$ and, due to Proposition \ref{Schmidli}, there exist a unique probability measure $\nu^{R^\ast}\in\mathcal{M}_{S,\mathbf{\Lambda}(\rho(\vT))}$ determined by condition ($RRM_{\xi}$) and a rcp $\{\nu_\theta^{R^\ast}\}_{\theta\in{D}}$ of $\nu^{R^\ast}$ over $\nu^{R^\ast}_\vT$ consistent with $\vT$ satisfying conditions $\nu^{R^\ast}_\theta\in\mathcal{M}_{S,\mathbf{\Lambda}(\rho(\theta))}$ and ($RRM_\theta$) for any $\theta\notin{L}_{\ast\ast}$. 
By Proposition \ref{prop2} we have 
  	\begin{align*}
  	\psi(u)&= \E_{\nu^{R^\ast}}\left[\frac{e^{R^\ast\cdot R^u_\tau +\kappa_\vT(R^\ast)\cdot \tau}}{\xi(\vT)} \right]\cdot e^{-R^\ast\cdot u}\\
  	&\leq\E_P\bigl[e^{R^\ast\vT}\bigr]\cdot \E_{\nu^{R^\ast}}\left[ e^{R^\ast\cdot R^u_\tau +\kappa_\vT(R^\ast)\cdot \tau}  \right]\cdot e^{-R^\ast\cdot u},
  	\end{align*} 
completing the whole proof.
\end{proof}

It is worth noting that, in the Cram\'{e}r-Lundberg risk model one can construct exponential martingales, and using the stopping theorem one is able to prove   upper bounds for the ruin probabilities. However, this technique does not allow us to prove a lower bound. A method to find also lower bounds for the ruin probabilities  is the ``change of measure technique" for a compound mixed renewal process $S$ developed above.

 \begin{ex}
 \label{bound1}
 \normalfont
 Take $D:=(1,2)$, let $\vT$ be a real-valued random variable on $\vO$, and assume that $P\in\M_{S,{\bf G}(\vT,2)}$, such that $P_{X_1}={\bf Ga}(2,2)$ and $P_{\vT}={\bf U}(1,2)$. Since conditions (a1) and (a2) hold true, it follows by \cite{mt2}, Proposition 3.3, that there exists a $P_\vT$--null set $L_P\in\B((1,2))$ such that $P_\theta\in\M_{S,{\bf Ga}(\theta,2)}$ with $P_{X_1}=(P_\theta)_{X_1}$ for any $\theta\notin L_P$, implying that 
 \[
 \frac{\E_{P_\theta}[X_1]}{\E_{P_\theta}[W_1]}=\frac{1}{\frac{2}{\theta}}=\frac{\theta}{2}\quad\text{ for any }\,\,\theta\notin L_P.
 \]
Put $c(\theta):=\theta+1$ for any $\theta\in D$. As a first step we are going to explicitly determine the function $\kappa_\theta$.  For any $r\in(0,2)$ and $\theta\notin L_{\ast\ast}$ applying condition \eqref{lem35a} and an easy computation we get  
\begin{align*}
&M_{X_1}(r)\cdot (M_{\theta})_{W_1}\bigl(-\kappa_{\theta}(r)-c(\theta)\cdot{r}\bigr)=1 \\
&\Longleftrightarrow  \left(\frac{2}{2-r}\right)^2\cdot\left(\frac{\theta}{\theta+\kappa_{\theta}(r)+c(\theta)\cdot{r}}\right)^2=1\\
 &\Longleftrightarrow  \kappa_{\theta}(r)=\frac{r^2\cdot c(\theta) +r\cdot \theta-2\cdot c(\theta)\cdot r}{2-r}\;\;\mbox{or}\;\;\kappa_{\theta}(r)=\frac{r^2\cdot c(\theta) +r\cdot (\theta-2\cdot c(\theta))-4\theta}{2-r}, 
\end{align*}
equivalently
 \begin{equation} 
\kappa_{\theta}(r)=\frac{r\cdot(r\cdot\theta+r-\theta-2)}{2-r}
\label{equa1}
\end{equation}
or
 \begin{equation} 
\kappa_{\theta}(r)=\frac{r^2(\theta+1)-r(\theta+2)-4\theta}{2-r},
\label{equa2}
\end{equation}
respectively.
By Theorem \ref{clm2} that there exists an adjustment coefficient $R(\theta)\in(0,2)$ with respect to $P_\theta$, for any  $\theta\notin L_{\ast\ast}$, while $R(\theta)$ is the positive solution to the equation $\kappa_{\theta}(r)=0$ by e.g. \cite{scm}, page 133. The latter along with equations \eqref{equa1} and \eqref{equa2} yields $R(\theta)=\frac{\theta+2}{\theta+1}\in(0,2)$ and 
\[
R(\theta)=\frac{\theta+2+(17\theta^2+20\theta+4)^{\frac{1}{2}}}{2(\theta+1)}>2\;\;\mbox{or}\;\; R(\theta)=\frac{\theta+2-(17\theta^2+20\theta+4)^{\frac{1}{2}}}{2(\theta+1)}<0,
\]
respectively; hence $R(\theta)=\frac{\theta+2}{\theta+1}$ is the solution to \eqref{equa1} in $(0,2)$.

 But since $R(\theta)$ is a strictly decreasing function of $\theta$ we get that $R^\ast=\sup_{\theta\in L^c_{\ast\ast}} R(\theta)=\frac{3}{2}\in(0,2)$, implying that $\E_P[e^{R^\ast\cdot X_1}]=\frac{2}{2-\frac{3}{2}}=4<\infty$
 as well as that 
 \[
 \kappa_\theta(R^\ast)=\frac{3}{2}\cdot(\theta-1)\quad\text{ for any }\,\,\theta\notin L_{\ast\ast}.
 \]
 
Put $\ga(x):=R^\ast\cdot x-\ln\E_P[e^{R^\ast\cdot X_1}]$ for any $x\in\vY$. By Theorem \ref{clm2}, for any $\xi\in\mathcal{R}_+(D)$  there exist a unique probability measure $Q^{R^\ast}\in\mathcal{M}_{S,\mathbf{\Lambda}(\rho(\vT))}$ determined by condition ($RRM_{\xi}$), and a rcp $\{Q_\theta^{R^\ast}\}_{\theta\in{D}}$ of $Q^{R^\ast}$ over $Q^{R^\ast}_\vT$ consistent with $\vT$ satisfying conditions $Q^{R^\ast}_\theta\in\mathcal{M}_{S,\mathbf{\Lambda}(\rho(\theta))}$ and ($RRM_\theta$) for any $\theta\notin{L}_{\ast\ast}$, and such that for any $u>0$ the probabilities of ruin $\psi_\theta^{R^\ast}(u)$ and $\psi^{R^\ast}(u)$ with respect to $Q_\theta^{R^\ast}$ and $Q^{R^\ast}$, respectively,  are equal to 1. It then follows by Proposition \ref{prop2} that for any $u>0$ and $\theta\notin L_{\ast\ast}$, the ruin probabilities $\psi(u)$ and $\psi_\theta(u)$ satisfy conditions 
 
 	\[
 	\psi_\theta(u)=\E_{Q_{\theta}^{R^\ast}}\bigl[e^{R^{\ast}r_\tau^u(\theta)+\kappa_\theta(R^\ast)\cdot\tau}\bigr]\cdot{e}^{-R^{\ast}u}=\E_{Q_{\theta}^{R^\ast}}\bigl[e^{\frac{3}{2}\cdot r_\tau^u(\theta)+\frac{3}{2}\cdot(\theta-1)\cdot\tau}\bigr]\cdot{e}^{-\frac{3}{2}\cdot u}
 	\]
 	 and
 	\[
 	\psi(u)= \E_{Q^{R^\ast}}\left[\frac{e^{R^\ast\cdot R^u_\tau +\kappa_\vT(R^\ast)\cdot \tau}}{\xi(\vT)} \right]\cdot e^{-R^\ast\cdot u}=\E_{Q^{R^\ast}}\left[\frac{e^{\frac{3}{2}\cdot R_\tau^u +\frac{3}{2}\cdot(\vT-1)\cdot\tau}}{\xi(\vT)} \right]\cdot e^{-\frac{3}{2}\cdot u}
 	\]
  \end{ex}

 \begin{ex}
	\label{bound2}
 \normalfont
	Take $D:=\vY$, let $\vT$ be a real-valued random variable on $\vO$, and assume that $P\in\M_{S,{\bf G}(\vT,2)}$, such that $P_{X_1}={\bf Ga}(2,2)$ and $P_{\vT}={\bf Ga}(b,a)$, where $(b,a)\in\vY^2$. Since conditions (a1) and (a2) hold true, it follows by \cite{mt2}, Proposition 3.3, that there exists a $P_\vT$--null set $L_P\in\B(D)$ such that $P_\theta\in\M_{S,{\bf Ga}(\theta,2)}$ with $P_{X_1}=(P_\theta)_{X_1}$ for any $\theta\notin L_P$, implying that 
	\[
	\frac{\E_{P_\theta}[X_1]}{\E_{P_\theta}[W_1]}=\frac{1}{\frac{2}{\theta}}=\frac{\theta}{2}\quad\text{ for any }\,\,\theta\notin L_P.
	\]
	
 Put $c(\theta):=\theta$ for any $\theta\in D$.  For any $r\in(0,2)$ and $\theta\notin L_{\ast\ast}$ we get as in Example \ref{bound1}  that there exists an adjustment coefficient $R(\theta)\in(0,2)$ with respect to $P_\theta$ being the solution to the equation 
\begin{equation}
\kappa_{\theta}(r)=\frac{r\cdot\theta\cdot (r-1)}{2-r}=0
\label{equa3}
\end{equation}
for any $\theta\notin{L}_{\ast\ast}$. Thus, we get 
$R^\ast=\sup_{\theta\in L^c_{\ast\ast}} R(\theta)=1\in(0,2)$, implying that $\E_P[e^{R^\ast\cdot X_1}]=\frac{2}{2-1}=2<\infty$.

Put $\ga(x):=R^\ast\cdot x-\ln\E_P[e^{R^\ast\cdot X_1}]$ for any $x\in\vY$. By Theorem \ref{clm2} for any $\xi\in\mathcal{R}_+(D)$  there exist a unique probability measure $Q^{R^\ast}\in\mathcal{M}_{S,\mathbf{\Lambda}(\rho(\vT))}$ determined by condition ($RRM_{\xi}$), and a rcp $\{Q_\theta^{R^\ast}\}_{\theta\in{D}}$ of $Q^{R^\ast}$ over $Q^{R^\ast}_\vT$ consistent with $\vT$ satisfying conditions $Q^{R^\ast}_\theta\in\mathcal{M}_{S,\mathbf{\Lambda}(\rho(\theta))}$ and ($RRM_\theta$) for any $\theta\notin{L}_{\ast\ast}$, and such that for any $u>0$ the probabilities of ruin $\psi_\theta^{R^\ast}(u)$ and $\psi^{R^\ast}(u)$ with respect to $Q_\theta^{R^\ast}$ and $Q^{R^\ast}$, respectively,  are equal to 1. It then follows by Proposition \ref{prop2} that for any $u>0$ and $\theta\notin L_{\ast\ast}$, the ruin probabilities $\psi(u)$ and $\psi_\theta(u)$ satisfy conditions 
	
	\[
	\psi_\theta(u)=\E_{Q_{\theta}^{R^\ast}}\bigl[e^{R^{\ast}r_\tau^u(\theta)+\kappa_\theta(R^\ast)\cdot\tau}\bigr]\cdot{e}^{-R^{\ast}u}=\E_{Q_{\theta}^{R^\ast}}\bigl[e^{ r_\tau^u(\theta)}\bigr]\cdot{e}^{-u}\leq e^{-u}
	\]
	and
	\[
	\psi(u)= \E_{Q^{R^\ast}}\left[\frac{e^{R^\ast\cdot R^u_\tau +\kappa_\vT(R^\ast)\cdot \tau}}{\xi(\vT)} \right]\cdot e^{-R^\ast\cdot u}=\E_{Q^{R^\ast}}\left[\frac{e^{R_\tau^u }}{\xi(\vT)} \right]\cdot e^{-u}\leq e^{- u},
	\]
where the inequalities follow by Theorem \ref{clm2}.	
\end{ex}

\end{document}